\title{Weingarten calculus with virtual isometries}
\author {Beno\^\i{}t Collins}
\address{Department of Mathematics, Kyoto University} 
\email{collins@math.kyoto-u.ac.jp}
\author {Sho Matsumoto}
\address{Graduate School of Science and Engineering, Kagoshima University} 
\email{shom@sci.kagoshima-u.ac.jp}
\numberwithin{equation}{section}
\theoremstyle{plain}
\newtheorem{lemma}{Lemma}[section]
\newtheorem{theorem}[lemma]{Theorem}
\newtheorem{proposition}[lemma]{Proposition}
\theoremstyle{definition}
\newtheorem{definition}[lemma]{Definition}
\theoremstyle{remark}
\newtheorem{remark}[lemma]{Remark}
\newtheorem{example}[lemma]{Example}
\newcommand{\UU}{\mathrm{U}}
\newcommand{\trans}[1]{#1^{\mathtt{T}}}
\newcommand{\fix}[1]{\mathsf{f}(#1)}
\newcommand{\Refl}{\mathcal{R}}
\newcommand{\mult}{\mathsf{m}}
\DeclareMathOperator{\Raise}{as}
\DeclareMathOperator{\Lower}{ds}
\DeclareMathOperator{\sgn}{sgn}
\DeclareMathOperator{\G}{G}
\DeclareMathOperator{\Wg}{Wg}
\DeclareMathOperator{\Fix}{Fix}
\begin{document}

\begin{abstract}
In this paper, we develop a novel approach to the Weingarten calculus by employing the notion of virtual isometries. Traditionally, Weingarten calculus provides explicit formulas for integrating polynomial functions over compact matrix groups with respect to the Haar measure, yet it faces limitations when evaluating high-degree integrals due to the non-invertibility of the associated matrices.
We revisit these classical computations from a new perspective: by constructing Haar-distributed matrices as products of sequences of complex reflections, we derive new recursive structures for the Weingarten functions across different dimensions. This framework leads to two main results: (1) an explicit Weingarten calculus for complex reflections, yielding systematic moment computations for associated rank-one matrices, and (2) a novel convolution formula that connects Weingarten functions in dimension $n$ to those in dimension $n-1$,
through the introduction of ascension functions in the symmetric group algebra. Our approach not only provides a unified treatment for unitary groups, but also sheds light on the algebraic and probabilistic aspects of high-degree integral computations. We present several examples and applications.
\end{abstract}

\subjclass[2020]{60B20, 15B52, 28C10}

\keywords{unitary group, Haar measure, Weingarten calculus, virtual isometry, complex reflection}

\maketitle

\section{Introduction}

\subsection{Weingarten calculus} 

Weingarten calculus is a method that allows one to explicitly calculate integrals of polynomial functions on compact groups with respect to the Haar probability measure.

An important case, which we focus on  in this paper, is the unitary group
$\UU(n)$, which consists of all $n \times n$ unitary matrices.
This case is significant because every compact matrix group is conjugate to a subgroup of the unitary group. 
Let $U=(u_{ij})_{1 \le i,j \le n}$ be a Haar-distributed matrix from $\UU(n)$.
By properties of the Haar measure, when $k$ and $l$ are distinct nonnegative integers, the expectation of the following expression is zero:
\[
\mathbb{E} \left[ u_{i_1 j_1} u_{i_2 j_2} \cdots u_{i_k j_k}
\overline{u_{i_1' j_1'} u_{i_2' j_2'}\cdots u_{i_l' j_l'}}\right]
\]
where $i_p, j_p, i_p', j_p' \in \{1,2,\dots,n\}$ and $p \ge 1$.
According to the \emph{Weingarten formula} \cite{Collins2003}, 
when $k=l$, this moment can be expressed as a sum over the symmetric group $S_k$:
\begin{multline} \label{eq:Wg_F}
    \mathbb{E} \left[ u_{i_1 j_1} u_{i_2 j_2} \cdots u_{i_k j_k}
\overline{u_{i_1' j_1'} u_{i_2' j_2'}\cdots u_{i_k' j_k'}}\right] \\
=\sum_{\sigma \in S_k}\sum_{\tau\in S_k} 
\delta(i_{\sigma(1)}, i_1')\cdots \delta(i_{\sigma(k)}, i_k') 
\delta(j_{\tau(1)}, j_1')\cdots \delta(j_{\tau(k)}, j_k')
\Wg_{k,n}(\sigma \tau^{-1}).
\end{multline}
Here $\delta(p,q)$ is the Kronecker delta, and $\Wg_{k,n}$ is a class function on $S_k$ known as the (unitary)
\emph{Weingarten function}.
See also \cite{MR597583}.
The Weingarten function has various representations; however, when $k \le n$, we can
immediately obtain the representation 
   \[
    \Wg_{k,n}(\sigma) = \mathbb{E} \left[ u_{11} u_{22} \cdots u_{kk} 
    \overline{u_{\sigma(1)1} u_{\sigma(2)2} \cdots u_{\sigma(k)k}} \right]
    \]
from \eqref{eq:Wg_F}.

The Weingarten function has been the subject of extensive research. This includes expansions using characters of symmetric groups, expressions through Jucys--Murphy elements, and series expansions as generating functions for monotone factorizations of permutations \cite{Novak2010, MatsumotoNovak}.
In addition, a relation between the Weingarten functions on $S_k$ and those on $S_{k-1}$ has been established  \cite{CollinsMatsumoto2017}.

\subsection{The problem of high degree integrals}\label{sec:problem} 
A common shortcoming of Weingarten calculus is that if one wants to evaluate an integral of degree strictly more than $n$ on the unitary or orthogonal group of dimension $n$, the Weingarten matrix is no longer invertible. 
For details, see, for example, \cite{CMN}.
There are several approaches to address this problem. 
The first one is to take the pseudo-inverse of the Weingarten matrix when it is not invertible. It is important to note that while a Weingarten formula exists irrespective of the degree, the Weingarten function is not unique when the degree becomes larger than the dimension. 

We are aware of two additional routes to compute polynomial integrals of high degree against the Haar measure, and describe them briefly in what follows. Both workarounds are successful from a theoretical point of view in the sense that they allow us to compute the integrals in principle. However, at this point, we are unable to extract any useful information for concrete applications. 
The first attempt can be found as a remark of a paper by Collins and \'Sniady \cite{CollinsSniady2006}, and the authors, along with Fukuda, have a joint working project to expand on this topic \cite{CollinsFukudaMatsumoto}.
Roughly, the idea is that any formal computation in the dimension, while in principle only well defined for a dimension that is large enough -- and potentially larger than the dimension in which one wants to compute, doing the calculation formally, simplifying, and then evaluating in the target dimension will  systematically give the correct result.

The second route was taken by Cioppa and Collins in the following unpublished article \cite{CioppaCollins}. 
Here, the idea is to replace the indicator functions which are simple to compute but do not yield an orthogonal Gram matrix by much more complicated functions obtained from the Gelfand-Zetlin basis, i.e. exhibit matrix elements of the commutant of tensor representations of the unitary group. 

The third approach is novel; it is actually the motivation for this paper. It comes from the following simple idea: from the point of view of differential geometry and measure theory, 
$\UU(n)/\UU(n-1)=S^{2n-1}$, and thus
 $\UU(n)=S^{2n-1}\times \UU(n-1)$
 and this extends to the level of product of uniform measures. 
 For a rigorous statement, we refer to Lemma \ref{lem:BNN}.
 Therefore, we can understand the Haar measure at the level $n$ from the Haar measure at the level $n-1$ and the uniform measure on the sphere. But it turns out that the uniform measure on the sphere can be understood with arbitrary order directly thanks to a Gaussianization trick. 
 This has been used in \cite{BanicaCollins2008}, and we also refer to the remark after Lemma \ref{lem:Wg_for_R} for more details.

 This is the approach that we develop on this paper and it allows us to revisit the theory of Weingarten calculus from a completely different point of view and in principle solve completely the aforementioned problem. 
For that purpose, we need the concept of virtual isometries, which we describe in the forthcoming subsection.

\subsection{Virtual isometries}

Olshanski \cite{Olshanski_unitary} considered a space of \emph{virtual unitary matrices} as projective limits of unitary groups. 
Additionally, Bourgade, Najnudel, and Nikeghbali \cite{BNN} independently explored this concept and referred to it as \emph{virtual isometry}.  
In this paper, we will present an introduction based on the latter approach.

A virtual isometry is a sequence $(g_n)_{n \ge 1}$ of unitary matrices
such that for all $n \ge 1$, $g_n \in \UU(n)$ and $\pi_{n}(g_{n})=g_{n-1}$.
Here, a specific definition for the projection $\pi_{n}: \UU(n) \to \UU(n-1)$ is given by Neretin \cite{Neretin} as follows.
For a unitary matrix $g_{n}=  (a_{ij})_{1 \le i,j \le n} \in \UU(n)$,
\[
\pi_{n} (g_{n})= \left(a_{ij} + \frac{a_{in} a_{nj}}{1-a_{nn}} \right)_{1 \le i,j \le n-1}
\]
if $a_{nn} \not=1$, 
and $\pi_{n} (g_{n})=(a_{ij})_{1 \le i,j \le n-1}$ if $a_{nn}=1$.
Such virtual isometries serve as extensions of the virtual permutations first presented by Kerov, Olshanski, and Vershik \cite{KOV1993, KOV2004}.

Due to \cite{BNN} (see also \cite{MNN}), 
virtual isometries can be constructed inductively as follows:
We equip $\mathbb{C}^n$ with the canonical scalar product: $\langle \bm{v},\bm{w} \rangle
=\sum_{j=1}^n v_j \overline{w_j}$ for $\bm{v}= \trans{(v_1,\dots,v_n)}$,
$\bm{w}=\trans{(w_1,\dots,w_n)} \in \mathbb{C}^n$.
Here, $\trans{A}$ and $\trans{\bm{v}}$ represent the transpose of a matrix $A$ or a vector $\bm{v}$.
Let $\bm{e}_1,\dots, \bm{e}_n$ be the standard orthonormal basis of $\mathbb{C}^n$. 
Consider a sequence $(\bm{x}_n)_{n \ge 1}$ of column vectors, where each $\bm{x}_n$ lies on
the complex unit sphere of $\mathbb{C}^n$ for $n \ge 1$.
We define a sequence of unitary matrices $r_n \in \UU(n)$ in the following way:
\begin{itemize}
\item $r_n$ is the identity matrix of size $n$ if $\bm{x}_n=\bm{e}_n$.
\item Otherwise, $r_n \in \UU(n)$ is the unique complex reflection  such that $r_n \bm{e}_n=\bm{x}_n$.
\end{itemize}
A virtual isometry $(g_n)_{n \ge 1}$ is given by induction as follows:
\[
g_1:=r_1=\bm{x}_1, \qquad g_n:= r_n \begin{pmatrix} g_{n-1} & \bm{0} \\ \trans{\bm{0}} & 1 \end{pmatrix}
\quad \text{for $n \ge 2$}.
\]
One can check that $\pi_{n}(g_n)=g_{n-1}$ in this construction.

The virtual isometry is consistent with the Haar measure.
Indeed, if $(X_n)_{n \ge 1}$ is a sequence of independent random vectors, each uniformly distributed on the complex unit sphere of $\mathbb{C}^n$.
If we denote by $R_n$ and $U_n$ the corresponding random matrices $r_n$ and $g_n$, respectively,  
in the above constructions,  then $U_n$ follows the Haar measure on $\UU(n)$.

A similar construction method for the Haar measure was also provided by A. Hurwitz (see \cite{DiaconisForrester}).
Virtual isometries have proven to be valuable in the analytic aspects of random matrix theory, enabling the attainment of almost sure results due to the correlations they provide between sequences of Haar-distributed unitary matrices (see \cite{Assiotis, CNN}).

\subsection{Main results}

Our goal is to provide a new perspective on Weingarten calculus based on the construction method of virtual isometry. 
The two main results of this paper are as follows. 

The first main result is Theorem \ref{thm:momentsP}, which focuses on a random complex reflection $R_n$ used in a construction involving virtual isometries. 
We establish Weingarten calculus for $R_n$, which provides  a systematic method for computing the moments of the matrix elements of $R_n$. 
Instead of working directly with $R_n$, 
we will deal with $P_n:=I_n-R_n$ because $P_n$ is a rank-one matrix, making it easier to handle.
Fortunately, the Weingarten calculus for $P_n=(p_{ij})_{1 \le i,j \le n}$ has been established in a concise form.
In fact, the moment for the form
\[
\mathbb{E} \left[ p_{i_1 j_1} p_{i_2 j_2}  \cdots p_{i_m j_m} 
\overline{p_{i_1' j_1'} p_{i_2' j_2'}  \cdots p_{i_l' j_l'}}
\right] 
\]
is explicitly given by a simple rational function in $n$.
We provide many examples of the calculation of the moments of matrix elements of $R_n$ and $P_n$.
For example, we obtain
\[
\mathbb{E} [|p_{11} p_{22} \cdots p_{kk}|^2] = \frac{2^k}{n^2(n+1)^2 \cdots (n+k-1)^2}
\]
if $k<n$.
From the construction of the virtual isometry, the moments for $P_n$  essentially contain all the information of a Haar-distributed unitary matrix $U_n$. 
Therefore, this provides a theoretical method to calculate the moments of Haar-distributed unitary matrices that differs from the traditional Weingarten calculus.

Our second main result is Theorem \ref{thm:recursive}, in which an explicit relationship is established between the unitary Weingarten functions of dimensions $n$ and $n-1$.
More specifically, if $n >k$, the Weingarten function $\Wg_{k,n}$ on the symmetric group $S_k$ is expressed as a convolution product in the group algebra of $S_k$:
\[
\Wg_{k,n}= \Raise_{k,n} * \Wg_{k,n-1} 
\]
where $\Raise_{k,n}$ is a new class function on $S_k$ and $\Wg_{k,n-1}$ is the Weingarten function of lower dimension $n-1$.
We call the function $\Raise_{k,n}$ an \emph{ascension function} and investigated its basic properties.
For example, finding the inverse of the function $\Raise_{k,n}$ with respect to the convolution product in the symmetric group algebra is straightforward.
The assumption $n > k$ is necessary in the above formula. Although a similar expression can be derived for the case when \( n = k \) with minor adjustments, it is important to emphasize that the resulting expression does not apply universally for all values of \( n \) and \( k \). Consequently, we have obtained a new representation of the Weingarten function utilizing the ascension functions and related constructs (see Proposition \ref{prop:Wg_asasas}).

These two main results are discussed in Sections \ref{section:Wg_P} and \ref{section:Wg_recursive}, respectively.
Importantly, let us add that Section \ref{sec:compute-integral} explains how to use these results systematically to compute integrals of polynomials of arbitrary degree, thus providing a new approach to the problem mentioned in Section \ref{sec:problem}.

Extending these results to the orthogonal group is not particularly challenging; This will be presented at a later opportunity.

At the end of the introduction, we explain the notation used.
Throughout this article, we use the notation 
\[
a^{\uparrow k} = \prod_{j=0}^{k-1} (a+j)
\]
where $a$ is a complex number and $k$ is a positive integer. For convenience, we also set $a^{\uparrow 0}=1$.
For clarity, we will use both $\delta_{ij}$ and $\delta(i,j)$ to denote the Kronecker delta.
The symbol \( I_n \) represents the identity matrix of size \( n \).

\section*{Acknowledgments}

 B. C. is supported by JSPS Grant-in-Aid for Scientific Research (A) no. 25H00593, and Challenging Research (Exploratory) nos. 20K20882 and 23K17299.

 S. M. is supported by JSPS Grant-in-Aid for Scientific Research (C) no. 22K03233.

 The idea to work on this project arose from discussions between B. C. and Sasha Bufetov, and from a talk by Ashkan Nikeghbali at the June 2023 RIMS conference on ``Random Matrices and Applications''. 
 We are grateful for these stimulating conversations and for the support provided by RIMS.

\section{Weingarten calculus for complex reflections} \label{section:Wg_P}

\subsection{Complex reflection in the construction of virtual isometry}

We equip $\mathbb{C}^n$ with the canonical scalar product: $\langle \bm{v},\bm{w} \rangle
=\sum_{j=1}^n v_j \overline{w_j}$ for $\bm{v}= \trans{(v_1,\dots,v_n)}$, $\bm{w}=\trans{(w_1,\dots,w_n)} \in \mathbb{C}^n$.
Let $\bm{e}_1,\dots, \bm{e}_n$ be the standard orthonormal basis of $\mathbb{C}^n$. 

A complex reflection on $\mathbb{C}^n$ is a unitary transformation $\Refl$ such that it is either the identity map $\mathrm{Id}$ or the rank of $\mathrm{Id}-\Refl$ is $1$. 
Every complex reflection can be represented as 
\[
\Refl_{\bm{a},\alpha}( \bm{v})= \bm{v} - (1-\alpha) \frac{\langle \bm{v},\bm{a} \rangle}{\langle \bm{a},\bm{a} \rangle} \bm{a} \qquad (\bm{v} \in \mathbb{C}^n),
\]
where $\bm{a}$ is some nonzero vector and $\alpha$ is a complex number with $|\alpha|=1$.
 Observe that $\Refl_{\bm{a},\alpha}(\bm{a})= \alpha \bm{a}$ and 
$\Refl_{\bm{a},\alpha}(\bm{w})= \bm{w}$ if $\bm{w}$ is orthogonal to $\bm{a}$.
Given two distinct unit vectors $\bm{e}$ and $\bm{m}$, there exists a unique reflection $\Refl$ such that
$\Refl(\bm{e})=\bm{m}$, and it is given by $\Refl_{\bm{m}-\bm{e},\alpha}$ with $\alpha= -\frac{1-\langle \bm{m},\bm{e} \rangle}{1-\langle \bm{e},\bm{m} \rangle}$.

Let 
\[
X=X_n =\trans{(x_1,x_2,\dots,x_n)}
\]
be a uniform random vector on the unit sphere of $\mathbb{C}^n$.
Since $X$ is almost surely different from $\bm{e}_n$, 
 there exists a unique reflection $\Refl$ such that
$\Refl(\bm{e}_n)=X$ as described above.
We denote by $R=R_n=(r_{ij})_{1 \le i,j \le n}$ its matrix representation with respect to the basis $\bm{e}_1,\dots, \bm{e}_n$.
It is straightforward to see that 
\begin{equation} \label{matrixR}
r_{ij}= \begin{cases}
\delta_{ij}-\frac{x_{i}\overline{x_{j}}}{1-\overline{x_{n}}}  &\text{if $j<n$ and $i <n$}; \\
\frac{1-x_{n}}{1-\overline{x_{n}}}\, \overline{x_{j}}  &\text{if $j<n$ and $i=n$};\\
x_{i} & \text{if $j=n$ and $i \le n$}.
\end{cases}
\end{equation}
In particular, the last column of $R$ is $X_n$.

The following lemma is the key to our approach. For a proof, we refer to \cite[Proposition 3.1]{BNN} or \cite[Theorem 1]{BNR}.
See also \cite[Proposition 2.1]{BHNY}.

\begin{lemma}
\label{lem:BNN}
Let $n \ge 2$. Let $V$ be a Haar-distributed unitary matrix from $\UU (n-1)$ and set 
\begin{equation} \label{eq:V}
\tilde{V}:= V \oplus (1)=\begin{pmatrix} V &  \bm{0} \\ \bm{0} & 1 \end {pmatrix} \in \UU(n).
\end{equation}
Let $X=\trans{(x_1,x_2,\dots,x_n)}$ be a uniform random vector on the unit sphere of $\mathbb{C}^n$,  independent of $V$, 
and we construct the random unitary matrix $R=(r_{ij})$ as described in \eqref{matrixR}.
Then the product $U:=R \tilde{V}$  follows the Haar measure on $\UU (n)$.
\end{lemma}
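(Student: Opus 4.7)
The strategy is to check that the random unitary $U = R\tilde V$ has left-$\UU(n)$-invariant law, and then invoke uniqueness of the Haar probability measure on the compact group $\UU(n)$. Fix $g \in \UU(n)$. Because the last column of $\tilde V$ is $\bm{e}_n$, the last column of $U$ is $R\bm{e}_n = X$, and therefore the last column of $gU$ is $gX$. Almost surely $gX \neq \bm{e}_n$, so there exists a unique complex reflection $R' \in \UU(n)$ sending $\bm{e}_n$ to $gX$.

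Set $W := (R')^{-1} g R$. Then $W\bm{e}_n = (R')^{-1}(gX) = \bm{e}_n$; since $W$ is unitary, fixing $\bm{e}_n$ forces $W$ to preserve $\bm{e}_n^{\perp}$ as well, so $W$ has the block form $W = \widetilde{W'}$ for some $W' \in \UU(n-1)$ depending measurably on $X$ and $g$. This yields the crucial factorization
\[
gU = g R \tilde V = R' \widetilde{W'}\,\tilde V = R' \widetilde{W' V}.
\]

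Next I would exploit independence and invariance. The uniform law on the unit sphere of $\mathbb{C}^n$ is $\UU(n)$-invariant, so $X' := gX$ is again uniform. Conditionally on $X$, the element $W'$ is a deterministic unitary, while $V$ is still Haar on $\UU(n-1)$ by independence from $X$; left-invariance of Haar on $\UU(n-1)$ gives that $W' V$ is conditionally Haar. Since this conditional law does not depend on $X$, the product $W' V$ is in fact independent of $X$, and hence of $X' = gX$. Therefore $gU$ decomposes as $R'\, \widetilde{W'V}$ where $R'\bm{e}_n = X'$ is uniform on the sphere and $W'V$ is an independent Haar element of $\UU(n-1)$, which is precisely the construction in the statement. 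Hence $gU \stackrel{d}{=} U$ for every $g \in \UU(n)$, and uniqueness of Haar measure finishes the proof.

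The main obstacle is the identity $W\bm{e}_n = \bm{e}_n$, which upgrades $W$ to the block-diagonal form $\widetilde{W'}$ and allows the $\UU(n-1)$-Haar factor $V$ to absorb $W'$ on the left; once this is in place, the remainder is standard product-measure bookkeeping combined with the defining invariances of the uniform sphere measure and of Haar measure on $\UU(n-1)$.
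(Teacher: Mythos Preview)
The paper does not give its own proof of this lemma; it simply cites \cite[Proposition~3.1]{BNN} and \cite[Theorem~1]{BNR}. Your argument is correct and is essentially the standard one found in those references: verify left invariance of the law of $U$ by absorbing the fixed $g$ into a new reflection $R'$ and a block $W'$ that is then swallowed by the Haar factor $V$ on $\UU(n-1)$. The key structural observation you isolate, namely that $W=(R')^{-1}gR$ fixes $\bm{e}_n$ and hence is block-diagonal, is exactly what makes the factorization $gU=R'\,\widetilde{W'V}$ work, and your conditional-independence reasoning showing that $W'V$ is Haar and independent of $X'=gX$ is the standard product-measure argument.
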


The next lemma will be used several times.

\begin{lemma} \label{lem:Wg_for_R}
Let $X=\trans{(x_1,\dots,x_n)}$ be  a uniform vector on the unit sphere of $\mathbb{C}^n$.
For $2n$ nonnegative integers $m_1,\dots, m_n, l_1,\dots, l_n$, the moment
\[
\mathbb{E}[x_1^{m_1} x_2^{m_2} \cdots x_n^{m_n} \overline{x_1^{l_1} x_2^{l_2} \cdots x_n^{l_n}} ]
\]
survives only if $m_j=l_j$ for all $1 \le j \le n$.
In this case, we have 
\[
\mathbb{E} [|x_1|^{2m_1} |x_2|^{2m_2} \cdots |x_n|^{2m_n}]  
= \frac{m_1! \, m_2! \, \cdots \,  m_n!}{n^{\uparrow m}},
\]
where $m=m_1+ m_2+ \cdots+m_n$.
\end{lemma}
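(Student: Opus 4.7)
The plan is to use the classical Gaussianization trick, which is alluded to in the remark preceding the lemma. Write $X = Z/\|Z\|$ where $Z = \trans{(z_1,\dots,z_n)}$ is a vector of i.i.d.\ standard complex Gaussians (so each $z_j$ has density $\pi^{-1} e^{-|z|^2}$). By the rotational invariance of the Gaussian measure on $\mathbb{C}^n$, the normalized vector $X$ is uniformly distributed on the unit sphere, and moreover $X$ is independent of the radial part $\|Z\|$.

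For the vanishing statement, I would first observe that $X$ and $e^{i\theta_j} x_j$ (applied coordinatewise with independent phases $\theta_1,\dots,\theta_n$) have the same distribution, since multiplying each $z_j$ by a unimodular scalar preserves the law of $Z$. The monomial $x_1^{m_1}\cdots x_n^{m_n}\overline{x_1^{l_1}\cdots x_n^{l_n}}$ then acquires a factor $\exp(i\sum_j (m_j-l_j)\theta_j)$ under this transformation, and averaging over the $\theta_j$ forces the expectation to vanish unless $m_j = l_j$ for every $j$.

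For the nonvanishing case, I would compute the joint moment in two ways. On the one hand, by independence and factorization,
\[
\mathbb{E}\left[|z_1|^{2m_1}\cdots|z_n|^{2m_n}\right] = \prod_{j=1}^n \mathbb{E}[|z_j|^{2m_j}] = \prod_{j=1}^n m_j!,
\]
using the standard fact that $|z_j|^2$ is exponential with mean $1$. On the other hand, since $|z_j|^{2m_j}$ factors as $\|Z\|^{2m_j}|x_j|^{2m_j}$ and since $\|Z\|$ is independent of $X$,
\[
\mathbb{E}\left[|z_1|^{2m_1}\cdots|z_n|^{2m_n}\right] = \mathbb{E}[\|Z\|^{2m}] \cdot \mathbb{E}[|x_1|^{2m_1}\cdots|x_n|^{2m_n}].
\]
The random variable $\|Z\|^2$ has the Gamma$(n,1)$ distribution, so
\[
\mathbb{E}[\|Z\|^{2m}] = \frac{\Gamma(n+m)}{\Gamma(n)} = n^{\uparrow m}.
\]
Dividing yields the stated formula.

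The main obstacle, if any, is simply making sure the independence of $X$ and $\|Z\|$ is invoked cleanly; this is a routine consequence of rotational invariance of the Gaussian law, so the proof is essentially short. No serious algebraic or combinatorial difficulty arises.
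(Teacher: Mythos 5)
Your proof is correct, but it takes a different route from the one the paper actually writes down for this lemma. The paper's proof identifies $X$ with the first column of a Haar-distributed matrix from $\UU(n)$ and then applies the Weingarten formula \eqref{eq:Wg_F}: the moment becomes $m_1!\cdots m_n!\sum_{\sigma\in S_m}\Wg_{m,n}(\sigma)$, and the sum is evaluated via the character expansion \eqref{eq:Wg_char} to give $1/n^{\uparrow m}$. Your Gaussianization argument --- writing $X=Z/\|Z\|$, using independence of the angular and radial parts, computing $\mathbb{E}[|z_j|^{2m_j}]=m_j!$ from the exponential law of $|z_j|^2$ and $\mathbb{E}[\|Z\|^{2m}]=n^{\uparrow m}$ from the Gamma law of $\|Z\|^2$ --- is exactly the alternative route the authors sketch in the remark immediately following the lemma, crediting \cite{BanicaCollins2008}. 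The trade-off: your argument is self-contained and elementary, requiring no prior knowledge of Weingarten functions (which is arguably preferable, since the lemma is later used as an ingredient in re-deriving Weingarten-type identities), whereas the paper's proof is shorter given that \eqref{eq:Wg_F} and \eqref{eq:Wg_char} have already been stated. Your handling of the vanishing case via independent coordinate-wise phase rotations is also fine and slightly more explicit than the paper's one-line dismissal. No gaps.
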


\begin{proof}
The random vector $X$ has the same distribution as
the first column of a Haar-distributed unitary matrix from $\UU(n)$.
This lemma is an easy example of Weingarten calculus.
In fact, using \eqref{eq:Wg_F}, we can see that 
\begin{align*}
\mathbb{E} [|x_1|^{2m_1} |x_2|^{2m_2} \cdots |x_n|^{2m_n}] 
&=m_1! \, m_2! \, \cdots \,  m_n! \sum_{\sigma \in S_m} \Wg_{m,n}(\sigma) \\
&= \frac{m_1! \, m_2! \, \cdots \,  m_n!}{n(n+1) \cdots (n+m-1)}.
\end{align*}
The second equality can be easily proved, for example, by using the character expansion \eqref{eq:Wg_char} given later.
\end{proof}

\begin{remark}
    This lemma can also be obtained through a trick that is sometimes 
called the gaussianization method. 
Indeed, let
$y_1,\ldots, y_n$ be $n$ i.i.d  standard complex Gaussian variables. Then, the random vector
$y/\|y\|$ has the same distribution as 
the uniformly distributed vector 
$x=(x_1,\ldots, x_n)$ on the complex unit sphere.
Conversely, if $y$ and $x$ are independent, 
$\|y\|\cdot x$ has the same distribution as $y$.
Using the complex version of Wick's theorem and the fact that $\|y\|$ has a complex Chi-square distribution, thus satisfying
$\mathbb{E}(\|y \|^{2k})=n(n+1)\cdots (n+k-1)$,
the proof follows. 
For applications and related materials, we refer to Theorem 6.1 of \cite{BanicaCollins2008}.
We also refer to \cite{MR1936554}, in which a Chi-square distribution also appears in a tridiagonalization of a related random matrix.
\end{remark}

\subsection{Weingarten calculus for a rank-one matrix}

Instead of working directly with $R$, 
We deal with $P =(p_{ij})_{1 \le i,j \le n}:=I-R$ because $P$ is a rank-one matrix, thus easier to handle.
By \eqref{matrixR} we have
\begin{equation} \label{matrixP}
p_{ij}= \begin{cases}
\frac{x_{i}\overline{x_{j}}}{1-\overline{x_{n}}}  &\text{if $j<n$ and $i <n$}; \\
-\frac{1-x_{n}}{1-\overline{x_{n}}}\, \overline{x_{j}}  &\text{if $j<n$ and $i=n$};\\
-x_i & \text{if $j=n$ and $i < n$}; \\
1-x_n & \text{if $i=j=n$}.
\end{cases}
\end{equation}

Let us fix some notation.
For a sequence $\bm{i}=(i_1,\dots, i_m)$ of positive integers and a positive integer $k$,
we denote by $\mult_k(\bm{i})$ the multiplicity of $k$ in $\bm{i}$:
\[
\mult_k (\bm{i})= |\{h \in \{1,2,\dots, m\} \ | \ i_h=k\}|.
\]
For two sequences 
$\bm{i}=(i_1,\dots, i_m)$ and  $\bm{j}=(j_1,\dots, j_l)$,
we define a combined sequence
$\bm{i} \sqcup \bm{j} = (i_1,\dots, i_m, j_1,\dots,j_l)$.
In what follows, the order of the letters is not significant. 
Note that $\mult_k(\bm{i} \sqcup \bm{j})= \mult_k(\bm{i})+ \mult_k(\bm{j})$.

The following theorem is our first main result.

\begin{theorem} \label{thm:momentsP}
Let $m,l$ be nonnegative integers, and consider four sequences 
\[
\bm{i}=(i_1,\dots, i_m), \quad \bm{j} =(j_1,\dots, j_m), \quad
\bm{i}'=(i_1',\dots, i_l'), \quad \bm{j}' =(j_1',\dots, j_l').
\]
of positive integers in $\{1,\dots, n\}$.
Then the moment
\[
\mathbb{E} \left[ p_{i_1 j_1} \cdots p_{i_m j_m} \overline{p_{i_1' j_1'} \cdots p_{i_l' j_l'}} \right]
\]
vanishes unless $\bm{i} \sqcup \bm{j}'$ is  a permutation of $\bm{j} \sqcup \bm{i}'$.
In this case, the moment is equal to 
\[
\alpha_1! \cdots \alpha_{n-1}! \frac{n^{\uparrow \alpha_n}}{n^{\uparrow m}n^{\uparrow l}},
\]
where
\[
\alpha_k := \mult_{k}(\bm{i}\sqcup \bm{j}') = \mult_{k}(\bm{j} \sqcup \bm{i}') 
\qquad (k=1,\dots,n).
\]
\end{theorem}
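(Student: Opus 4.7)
My plan is to exploit the rank-one structure of $P = I - R$. From the general formula for a complex reflection, applied with $\bm{a} = X - \bm{e}_n$ and $\alpha = -(1-x_n)/(1-\overline{x_n})$, one directly verifies
\[
p_{ij} = \frac{a_i\overline{a_j}}{1-\overline{x_n}}, \qquad \text{where } a_k := x_k - \delta_{kn}.
\]
Setting $A_k := \mult_k(\bm{i}) + \mult_k(\bm{j}')$ and $B_k := \mult_k(\bm{j}) + \mult_k(\bm{i}')$, the monomial of interest becomes
\[
\prod_{h=1}^m p_{i_h j_h}\cdot \overline{\prod_{h=1}^l p_{i'_h j'_h}} = \frac{\prod_{k=1}^n a_k^{A_k}\overline{a_k}^{B_k}}{(1-\overline{x_n})^m(1-x_n)^l}.
\]

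For the vanishing part I would apply the torus action $(x_1,\dots,x_{n-1}) \mapsto (e^{i\phi_1}x_1,\dots,e^{i\phi_{n-1}}x_{n-1})$, which preserves the uniform law on the sphere and fixes $x_n$, multiplying the integrand by $\prod_{k<n} e^{i(A_k-B_k)\phi_k}$. Averaging over the phases forces $A_k = B_k$ for every $k<n$; since $\sum_k A_k = \sum_k B_k = m+l$ this automatically propagates to $k=n$, giving exactly the condition that $\bm{i}\sqcup\bm{j}'$ is a permutation of $\bm{j}\sqcup\bm{i}'$.

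Under the matching condition, write $\alpha_k = A_k = B_k$ and $\beta = \sum_{k<n}\alpha_k$. Since $A_n + B_n = 2\alpha_n$ is even, the sign coming from $a_n = -(1-x_n)$ is trivial, and the integrand reduces to $\prod_{k<n}|x_k|^{2\alpha_k} (1-x_n)^p (1-\overline{x_n})^q$ with $p = \alpha_n - l$ and $q = \alpha_n - m$. I would then compute the expectation via polar coordinates on the sphere: $x_n = re^{i\theta}$ is independent of $Y := (x_1,\dots,x_{n-1})/\sqrt{1-r^2}$, where $Y$ is uniform on the unit sphere of $\mathbb{C}^{n-1}$, $\theta$ is uniform on $[0,2\pi)$, and $r^2$ has density $(n-1)(1-u)^{n-2}$. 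The $Y$-expectation is furnished by \cref{lem:Wg_for_R} in dimension $n-1$; the $\theta$-integral of $(1-re^{i\theta})^p(1-re^{-i\theta})^q$ collapses by Fourier expansion to $\sum_{s\geq 0}\binom{p}{s}\binom{q}{s} r^{2s}$; and the $r$-integral against $(1-r^2)^\beta$ is a classical Beta integral. After the factor $(n-1)$ combines neatly with $(n-1)^{\uparrow\beta}$, the resulting moment takes the compact form
\[
\prod_{k<n}\alpha_k!\cdot \sum_{s\geq 0} \binom{p}{s}\binom{q}{s} \frac{s!}{n^{\uparrow\beta+s}},
\]
in which the inner sum is Gauss's hypergeometric ${}_2F_1(-p,-q; n+\beta; 1)$. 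The Chu--Vandermonde / Gauss summation identity evaluates it to $n^{\uparrow\beta}\cdot n^{\uparrow\alpha_n}/(n^{\uparrow m} n^{\uparrow l})$, and substituting gives exactly the claimed answer.

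The chief obstacle is the regime $p<0$ or $q<0$ (equivalently $\beta > m$ or $\beta > l$), in which the Fourier expansion becomes an infinite series and the factors $(1-x_n)^p$, $(1-\overline{x_n})^q$ are pointwise unbounded on the sphere -- although the original integrand is certainly bounded (each $|p_{ij}|\leq 2$) so the moment is finite. The interchange of summation and integration can be justified directly by Fubini on the Beta integrals $\int_0^1 r^{2s}(1-r^2)^{\beta+n-2}\,dr$, or more elegantly by an analytic-continuation argument: both sides of the claimed identity are rational functions of $n$, and in the safe regime $n > m+l$ one has $p,q\geq 0$ with the sum terminating into a finite polynomial identity; equality in that range then forces equality for all $n$.
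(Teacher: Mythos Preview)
Your argument is correct and arrives at precisely the same Gauss summation ${}_2F_1(-p,-q;\,n+\beta;\,1)$ that the paper reaches (your $p,q$ coincide with the paper's exponents $s,t$, since $\alpha_n-l=m-\beta$ and $\alpha_n-m=l-\beta$). The route differs in two respects worth noting. First, your unified rank-one identity $p_{ij}=a_i\overline{a_j}/(1-\overline{x_n})$ with $a_k=x_k-\delta_{kn}$ replaces the paper's four-case formula \eqref{matrixP} and eliminates the sign bookkeeping (the paper must separately verify $c\equiv 0\pmod 2$). Second, you evaluate the sphere moment by polar decomposition---splitting off $x_n=re^{i\theta}$, computing the $Y$-expectation on the $(n-1)$-sphere via \cref{lem:Wg_for_R}, then a Fourier $\theta$-average and a Beta $r$-integral---whereas the paper expands $(1-x_n)^s(1-\overline{x_n})^t$ as a double generalized-binomial series in $x_n,\overline{x_n}$ and applies \cref{lem:Wg_for_R} on the full $n$-sphere term by term. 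The two computations collapse to the same single-index hypergeometric series; your version is arguably more geometric, the paper's slightly more direct.

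One small correction: your fallback analytic-continuation argument does not work as written. The exponents $p=\alpha_n-l$ and $q=\alpha_n-m$ depend only on the index data, not on $n$; taking $n>m+l$ for fixed indices makes $\alpha_n=0$ (the index $n$ no longer occurs) and hence $p=-l$, $q=-m$, which are not nonnegative. Fortunately your primary Fubini justification is valid and suffices: for negative integers $p,q$ one has $\bigl|\binom{p}{s}\binom{q}{s}\bigr|\asymp s^{-p-q-2}$, while the Beta integral $B(s+1,\beta+n-1)$ decays like $s^{-(\beta+n-1)}$, so the summand is $O(s^{-(n+\alpha_n+1)})$ and the series converges absolutely for every $n\ge 1$.
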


We postpone the proof of Theorem \ref{thm:momentsP} to the next subsection.

\begin{example}
\begin{enumerate}
\item Suppose $n>1$ and consider 
\[
\mathbb{E} [|p_{11}|^4 |p_{nn}|^2]
=\mathbb{E}[(p_{11})^2 p_{nn} (\overline{p_{11}})^2 \overline{p_{nn}}].
\]
Apply Theorem \ref{thm:momentsP} with
\[
\bm{i} =\bm{j}=\bm{i}'=\bm{j}'=(1,1,n).
\]
Then 
\[
\mathbb{E} [|p_{11}|^4 |p_{nn}|^2] =4! \frac{n^{\uparrow 2}}{n^{\uparrow 3} n^{\uparrow 3}}
= \frac{24}{n(n+1) (n+2)^2}.
\]
\item Suppose $n>2$ and 
consider $\mathbb{E}[(p_{12})^2 (p_{n1})^2(p_{nn})^3 (\overline{p_{n2}})^2 ]$.
The data is
\[
\bm{i}=(1,1,n,n,n,n,n), \quad \bm{j}=(2,2,1,1,n,n,n), \quad \bm{i}'=(n,n), \quad \bm{j}'=(2,2).
\]
Then
\[
\mathbb{E}[(p_{12})^2 (p_{n1})^2(p_{nn})^3 (\overline{p_{n2}})^2 ]
=  2! \, 2! \, \frac{n^{\uparrow 5}}{n^{\uparrow 7} n^{\uparrow 2}}  
= \frac{4}{n(n+1)(n+5)(n+6)}.
\]
\item Suppose $n>3$ and  consider $\mathbb{E} [p_{12} p_{21} (p_{nn})^4 (\overline{p_{33}})^2
(\overline{p_{nn}})^3]$. 
The data is 
\[
\bm{i}=(1,2,n,n,n,n), \quad \bm{j}=(2,1,n,n,n,n), \quad \bm{i}'= \bm{j}'=(3,3,n,n,n).
\]
Then
\begin{align*}
\mathbb{E} [p_{12} p_{21} (p_{nn})^4 (\overline{p_{33}})^2
(\overline{p_{nn}})^3] &= 1! \, 1! \, 2!\, \frac{n^{\uparrow 7}}{n^{\uparrow 6} n^{\uparrow 5}} \\ 
&=  
\frac{2(n+6)}{n(n+1)(n+2)(n+3)(n+4)}.
\end{align*}
\end{enumerate}
\end{example}

\subsection{Proof of the theorem}

We rewrite Theorem \ref{thm:momentsP} in an equivalent form.
For an $n \times n$ matrix $A=(a_{ij})_{1 \le i,j \le n}$ and for each $1 \le k \le n$,
we denote by $a_{k \bullet}$ the sum of all entries in the $k$-th row of $A$ 
and by $a_{\bullet k}$ the sum of all entries in the $k$-th column of $A$:
\[
a_{k \bullet} = \sum_{j=1}^n a_{kj}, \qquad a_{\bullet k} = \sum_{i=1}^n a_{ik}.
\]
Furthermore, we define
\[
\Sigma_A = \sum_{i=1}^n \sum_{j=1}^n a_{ij} = \sum_{k=1}^n a_{k \bullet}  = \sum_{k=1}^n a_{\bullet k}.
\]
It is straightforward to verify that Theorem \ref{thm:momentsP} is equivalent to Theorem \ref{thm:momentsP2}.

\begin{theorem} \label{thm:momentsP2}
Let $A=(a_{ij})_{1 \le i,j \le n}$ and $B=(b_{ij})_{1 \le i,j \le n}$ be two matrices whose entries are nonnegative integers.
Then the moment
\[
\mathbb{E} \left[\prod_{i,j=1}^n (p_{ij})^{a_{ij}}  (\overline{p_{ij}})^{b_{ij}} \right]
\]
vanishes unless 
\[
a_{\bullet k} + b_{k \bullet} = a_{k \bullet} + b_{\bullet k } =:\alpha_k
\]
for all $k=1,2,\dots, n$.
In this case, the moment is equal to 
\[
 \alpha_1! \cdots \alpha_{n-1}! 
\frac{n^{\uparrow \alpha_n}}{n^{\uparrow \Sigma_A} n^{\uparrow \Sigma_B}}.
\]
\end{theorem}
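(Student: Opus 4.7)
The plan is to exploit the rank-one structure of $P$ by writing all four cases of \eqref{matrixP} in a single unified form
\[
p_{ij} = -\frac{y_i\,\overline{y_j}}{\overline{y_n}},\qquad y_k := x_k \ (k<n),\quad y_n := x_n-1,
\]
which is the componentwise expression of $P=(X-\bm{e}_n)(X-\bm{e}_n)^*/(1-\overline{x_n})$ and can be checked against each of the four cases. Collecting row and column exponents cell by cell in the product yields
\[
\prod_{i,j=1}^n (p_{ij})^{a_{ij}}(\overline{p_{ij}})^{b_{ij}} \;=\; (-1)^{\Sigma_A+\Sigma_B}\,\frac{\prod_{k=1}^n y_k^{\alpha_k}\,\overline{y_k}^{\beta_k}}{y_n^{\Sigma_B}\,\overline{y_n}^{\Sigma_A}},
\]
where $\alpha_k:=a_{k\bullet}+b_{\bullet k}$ and $\beta_k:=b_{k\bullet}+a_{\bullet k}$.

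Substituting $y_n=-(1-x_n)$ and expanding the factor $(1-x_n)^{\alpha_n-\Sigma_B}(1-\overline{x_n})^{\beta_n-\Sigma_A}$ as a (generalized) binomial series in $x_n,\overline{x_n}$, the problem reduces to evaluating expectations of monomials $\prod_{k<n}x_k^{\alpha_k}\overline{x_k}^{\beta_k}\cdot x_n^{j}\overline{x_n}^{j'}$. \Cref{lem:Wg_for_R} applies immediately: each such expectation vanishes unless $\alpha_i=\beta_i$ for every $i<n$ and $j=j'$. The identity $\sum_k\alpha_k=\sum_k\beta_k=\Sigma_A+\Sigma_B$ upgrades this to $\alpha_n=\beta_n$ automatically, recovering the full vanishing criterion of the theorem; simultaneously the overall sign $(-1)^{\Sigma_A+\Sigma_B}\cdot(-1)^{(\alpha_n-\Sigma_B)+(\beta_n-\Sigma_A)}=(-1)^{\alpha_n+\beta_n}$ becomes $+1$.

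Writing $c:=\alpha_n-\Sigma_B$, $d:=\beta_n-\Sigma_A$, $\alpha:=\sum_{k<n}\alpha_k$, what remains is the scalar identity
\[
\sum_{j\geq 0}\binom{c}{j}\binom{d}{j}\frac{j!}{n^{\uparrow \alpha+j}}\;=\;\frac{n^{\uparrow \alpha_n}}{n^{\uparrow \Sigma_A}\,n^{\uparrow \Sigma_B}},
\]
which, multiplied by $\alpha_1!\cdots\alpha_{n-1}!$, gives the formula in the theorem. The left-hand side equals $\frac{1}{n^{\uparrow\alpha}}\cdot{}_2F_1(-c,-d;\,n+\alpha;\,1)$, and Gauss's summation theorem evaluates this hypergeometric to $\Gamma(n+\alpha)\Gamma(n+\alpha_n)/[\Gamma(n+\Sigma_A)\Gamma(n+\Sigma_B)]$ by way of the relations $n+\alpha+c=n+\Sigma_A$, $n+\alpha+d=n+\Sigma_B$, and $n+\alpha+c+d=n+\alpha_n$; the Gamma ratios telescope to the stated rising factorials.

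The main piece of bookkeeping is the termwise interchange of the binomial expansion with the expectation when $c$ or $d$ is negative. A short case analysis handles both possibilities: if at least one of $c,d$ is a non-negative integer the series terminates, while if both are negative the balance $|c|+|d|<n+\alpha$ forces absolute convergence by Stirling. Thus the interchange is always legitimate. A purely combinatorial alternative that avoids ${}_2F_1$ entirely is to integrate directly against the disk density $\tfrac{n-1}{\pi}(1-|x_n|^2)^{n-2}$ of $x_n$ and carry out the resulting radial integration as an elementary beta integral; either path confirms the identity.
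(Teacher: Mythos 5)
Your proof is correct and follows essentially the same route as the paper's: reduce the product to a monomial in $x_1,\dots,x_{n-1}$ times powers of $1-x_n$ and $1-\overline{x_n}$, expand by the generalized binomial theorem, apply \cref{lem:Wg_for_R} termwise, and evaluate the resulting series as a Gauss hypergeometric at $1$ (your $c,d$ are exactly the paper's $s,t$). The only differences are presentational improvements on your side — the unified rank-one formula $p_{ij}=-y_i\overline{y_j}/\overline{y_n}$ replaces the paper's four-case exponent bookkeeping, and you justify the sum--expectation interchange more explicitly than the paper's one-line appeal to Fubini.
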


\begin{proof}
By the definition \eqref{matrixP} of $(p_{ij})$, we have
\[
\prod_{i,j=1}^n (p_{ij})^{a_{ij}}  (\overline{p_{ij}})^{b_{ij}}  
= (-1)^c
\prod_{k=1}^{n-1} \left( x_k^{a_{\bullet k} +b_{k \bullet}}  
\overline{x_k}^{a_{k \bullet}+b_{\bullet k}}  \right)
\cdot (1-x_n)^{s} (1-\overline{x_n})^{t},
\]
where 
\begin{align*}
c &= \sum_{j=1}^{n-1} (a_{nj}+b_{nj}) + \sum_{i=1}^{n-1} (a_{in}+b_{in}),   \\
s &= a_{n \bullet} -\sum_{j=1}^{n-1} b_{\bullet j}, \qquad
t = b_{n \bullet} -\sum_{j=1}^{n-1} a_{\bullet j}.
\end{align*}
Since $|x_n|<1$ almost surely,
the generalized binomial theorem implies that 
\begin{align*}
&\mathbb{E} \left[ \prod_{i,j=1}^n (p_{ij})^{a_{ij}}  (\overline{p_{ij}})^{b_{ij}}  \right] \\
&= (-1)^{c} 
  \sum_{m=0}^\infty\sum_{l=0}^\infty\frac{(-s)^{\uparrow m}(-t)^{\uparrow l}}{m! \, l! } 
\mathbb{E} \left[\prod_{k=1}^{n-1} \left( x_k^{a_{\bullet k} +b_{k \bullet}}  
\overline{x_k}^{a_{k \bullet}+b_{\bullet k}}  \right) 
\cdot  x_n^{m} \overline{x_n}^{l}  \right].  
\end{align*}
Since we are integrating on the unit sphere, the interchange of the infinite sum and the expectation follows from Fubini's theorem.
Applying Lemma~\ref{lem:Wg_for_R} to each term, a term survives only if the equalities
$a_{\bullet k} + b_{k \bullet} = a_{k \bullet} + b_{\bullet k }$ hold for all $k=1,2,\dots,n-1$, and if $m=l$ holds.
In this case, we also have $a_{\bullet n} + b_{n \bullet} = a_{n \bullet} + b_{\bullet n}$ and
$c \equiv 0 \pmod{2}$, which leads to the following equations:
\begin{equation} \label{eq:alpha_Sigma}
\begin{aligned}
s+\alpha_1+ \cdots+ \alpha_{n-1} &= \Sigma_A, \\
t+\alpha_1+ \cdots+ \alpha_{n-1}  &= \Sigma_B, \\
s+t+\alpha_1+ \cdots+ \alpha_{n-1} &= \alpha_n.
\end{aligned}
\end{equation}
Furthermore, applying Lemma~\ref{lem:Wg_for_R} again, we obtain
\[
\mathbb{E} \left[ \prod_{i,j=1}^n (p_{ij})^{a_{ij}}  (\overline{p_{ij}})^{b_{ij}}  \right] 
= \alpha_1 ! \cdots \alpha_{n-1}! \sum_{m=0}^\infty \frac{(-s)^{\uparrow m}(-t)^{\uparrow m}}{m! \, n^{\uparrow (\alpha_1+ \cdots+ \alpha_{n-1}+m)}}.
\]

We continue the calculations of the last series.
\begin{align*}
&\sum_{m=0}^\infty \frac{(-s)^{\uparrow m}(-t)^{\uparrow m}}{m! \, n^{\uparrow (\alpha_1+ \cdots+ \alpha_{n-1}+m)}} \\
&=\frac{1}{n^{\uparrow (\alpha_1+\cdots+ \alpha_{n-1})}} 
  \sum_{m=0}^\infty \frac{(-s)^{\uparrow m}(-t)^{\uparrow m}}{m! \, (n+\alpha_1+ \cdots+ \alpha_{n-1})^{\uparrow m}} \\
  &= \frac{1}{n^{\uparrow (\alpha_1+\cdots+ \alpha_{n-1})}} \cdot
  F(-s,-t;n+\alpha_1+ \cdots+ \alpha_{n-1};1),
\end{align*}
where $F(\alpha,\beta;\gamma;x)$ is the Gauss hypergeometric series.
The special value $F(\alpha,\beta;\gamma;1)$ makes sense if $\gamma-(\alpha+\beta)>0$, which is fulfilled in our setting.
Using the well-known formula $F(\alpha,\beta;\gamma;1)= \frac{\Gamma(\gamma) \Gamma(\gamma-\alpha-\beta)}{\Gamma(\gamma-\alpha)\Gamma(\gamma-\beta)}$ (see, e.g., \cite[Theorem 2.2.2]{AAR}), 
where $\Gamma(x)$ is the gamma function, 
we obtain
\[
F(-s,-t;n+\alpha_1+ \cdots+ \alpha_{n-1};1)
= \frac{\Gamma(n+\alpha_1+\cdots+\alpha_{n-1})\Gamma (n+\alpha_n)}{\Gamma(n+\Sigma_A) \Gamma(n+\Sigma_B)},
\]
where we used \eqref{eq:alpha_Sigma}.
The result follows by using $n^{\uparrow k}=\frac{\Gamma(n+k)}{\Gamma(n)}$.
\end{proof}

\subsection{Some examples for reflection matrices}

By utilizing Theorem~\ref{thm:momentsP} or Theorem~\ref{thm:momentsP2}, we can theoretically compute the moments for $(r_{ij})$.
However, when many diagonal elements $r_{ii}$ appear, many factors of the form $(1 - p_{ii})$ arise, which complicates the calculations using the theorems. 
While it seems complex in general, some calculations can be performed to some extent as follows.

\begin{proposition} 
\label{prop:moment_R_ex1}
Let $m,l, q$ be nonnegative integers, and consider four sequences:
\[
\bm{i}=(i_1,\dots, i_m), \quad \bm{j} =(j_1,\dots, j_m), \quad
\bm{i}'=(i_1',\dots, i_l'), \quad \bm{j}' =(j_1',\dots, j_l').
\]
Then, for each $1 \le s \le n$ the moment 
\[
\mathbb{E} \left[p_{i_1 j_1} \cdots p_{i_m j_m} \overline{p_{i_1' j_1'} \cdots p_{i_l' j_l'}} r_{ss}^q\right]
\]
survives only if $\bm{i} \sqcup \bm{j}'$ is  a permutation of $\bm{j} \sqcup \bm{i}'$.
In this case, the moment is equal to 
\[
\alpha_1 ! \cdots \alpha_{n-1}! \,   \frac{n^{\uparrow \alpha_n}}{n^{\uparrow (m+q)} n^{\uparrow l}}
\times 
\begin{cases}
(n+m-\alpha_s-1)^{\uparrow q} & \text{if $s<n$}; \\
(m-\alpha_n)^{\uparrow q} & \text{if $s=n$}, 
\end{cases}
\]
where $\alpha_k:=\mult_{k}(\bm{i}\sqcup \bm{j}') = \mult_{k}(\bm{j}\sqcup\bm{i}')$
for all $k=1,2,\dots, n$.
Note that the last factor may be zero.
\end{proposition}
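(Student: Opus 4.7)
The plan is to reduce the moment involving $r_{ss}^q$ to moments already covered by \cref{thm:momentsP} via the identity $r_{ss} = 1 - p_{ss}$. Indeed, from \eqref{matrixR} and \eqref{matrixP}, $r_{ss} = \delta_{ss} - p_{ss}\cdot\mathbf{1}_{s<n}$ for $s<n$, and $r_{nn} = x_n = 1 - p_{nn}$; either way, $r_{ss} = 1 - p_{ss}$. So I would write
\[
r_{ss}^q = \sum_{t=0}^q \binom{q}{t} (-1)^t p_{ss}^t
\]
and apply \cref{thm:momentsP} (or equivalently \cref{thm:momentsP2}) to each of the $(q+1)$ moments obtained by appending $t$ copies of $p_{ss}$ to the product.

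The vanishing criterion for $\bm{i} \sqcup \bm{j}'$ being a permutation of $\bm{j} \sqcup \bm{i}'$ is preserved by this appending (each copy of $p_{ss}$ adds an $s$ simultaneously to $\bm{i}$ and to $\bm{j}$), so the only surviving terms satisfy the stated hypothesis, and the multiplicities become $\alpha_k$ for $k \neq s$ and $\alpha_s + t$ for the index $s$; moreover $m$ becomes $m+t$. Factoring out the $t$-independent prefactor $\alpha_1!\cdots\alpha_{n-1}! \, n^{\uparrow \alpha_n}/(n^{\uparrow m} n^{\uparrow l})$, the problem reduces, in the case $s<n$, to evaluating
\[
S_{s<n} = \sum_{t=0}^q (-1)^t \binom{q}{t} \frac{(\alpha_s+1)^{\uparrow t}}{(n+m)^{\uparrow t}},
\]
and in the case $s=n$, since the added $t$ copies modify $\alpha_n$ rather than producing a factorial factor, to evaluating
\[
S_{s=n} = \sum_{t=0}^q (-1)^t \binom{q}{t} \frac{(n+\alpha_n)^{\uparrow t}}{(n+m)^{\uparrow t}}.
\]

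The main computational step, and what might look like the main obstacle but is actually routine, is recognizing each of these as a Chu--Vandermonde summation $\,_2F_1(-q,b;c;1) = (c-b)^{\uparrow q}/c^{\uparrow q}$ (cf.\ \cite[Cor.\ 2.2.3]{AAR}): taking $(b,c) = (\alpha_s+1, n+m)$ gives $S_{s<n} = (n+m-\alpha_s-1)^{\uparrow q}/(n+m)^{\uparrow q}$, and taking $(b,c) = (n+\alpha_n, n+m)$ gives $S_{s=n} = (m-\alpha_n)^{\uparrow q}/(n+m)^{\uparrow q}$. Using the identity $n^{\uparrow m}/(n+m)^{\uparrow q} = n^{\uparrow m}/n^{\uparrow(m+q)} \cdot n^{\uparrow q}/n^{\uparrow q}$, more cleanly $1/(n+m)^{\uparrow q} = n^{\uparrow m}/n^{\uparrow(m+q)}$, and multiplying back the prefactor yields precisely the asserted expression in both cases. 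The only delicate point worth explicit mention is that $(m-\alpha_n)^{\uparrow q}$ (and likewise $(n+m-\alpha_s-1)^{\uparrow q}$) may legitimately equal zero when one of the terms in the rising factorial vanishes, which is consistent with the statement's disclaimer that the last factor may be zero.
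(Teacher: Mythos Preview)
Your proof is correct and follows essentially the same route as the paper: expand $r_{ss}^q=(1-p_{ss})^q$ by the binomial theorem, apply \cref{thm:momentsP} term by term, and evaluate the resulting ${}_2F_1(-q,b;c;1)$ via Chu--Vandermonde (which the paper phrases as Gauss's summation formula, as in the proof of \cref{thm:momentsP2}). The only cosmetic difference is that the paper writes the binomial coefficients as $(-q)^{\uparrow t}/t!$ rather than $(-1)^t\binom{q}{t}$.
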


\begin{proof}
Applying the binomial theorem to $r_{ss}^q=(1-p_{ss})^q$, we have
\begin{align}
&\mathbb{E} \left[p_{i_1 j_1} \cdots p_{i_m j_m} \overline{p_{i_1' j_1'} \cdots p_{i_l' j_l'}} r_{ss}^q\right] \notag \\
&= \sum_{k=0}^q \frac{(-q)^{\uparrow k}}{k!} \mathbb{E} \left[p_{i_1 j_1} \cdots p_{i_m j_m} \overline{p_{i_1' j_1'} \cdots p_{i_l' j_l'}}p_{ss}^k \right]. \label{eq:1_diagonal_R}
\end{align}
First, we deal with the case $s <n$. Applying Theorem \ref{thm:momentsP} to each term, we have
\begin{align*}
\eqref{eq:1_diagonal_R}   &=
  \sum_{k=0}^q  \frac{(-q)^{\uparrow k}}{k!}  \alpha_1!\,  \cdots \, \alpha_{n-1}! \cdot \frac{(\alpha_s+ k)!}{\alpha_s!} 
 \cdot \frac{n^{\uparrow \alpha_n}}{n^{\uparrow (m+k)} n^{\uparrow l}} \\
 &=  \alpha_1!\,  \cdots \, \alpha_{n-1}! \cdot \frac{n^{\uparrow \alpha_n}}{n^{\uparrow m} n^{\uparrow l}} \sum_{k=0}^q \frac{(-q)^{\uparrow k} (\alpha_s+1)^{\uparrow k}}{k! \, (n+m)^{\uparrow k}}.
\end{align*}
Using a similar method to that in the proof of Theorem \ref{thm:momentsP}, we recognize the last sum as a Gauss hypergeometric series and obtain 
\begin{align*}
    &=  \alpha_1!\,  \cdots \, \alpha_{n-1}! \cdot \frac{n^{\uparrow \alpha_n}}{n^{\uparrow m} n^{\uparrow l}} \cdot \frac{(n+m-\alpha_s-1)^{\uparrow q}}{(n+m)^{\uparrow q}} \\
    &=   \alpha_1!\,  \cdots \, \alpha_{n-1}! \cdot \frac{n^{\uparrow \alpha_n}}{n^{\uparrow (m+q)} n^{\uparrow l}} \cdot (n+m-\alpha_s-1)^{\uparrow q}.
\end{align*}
This concludes the proof for the case $s < n$.

Next, we deal with the case $s =n$. A similar approach gives 
\begin{align*}
\eqref{eq:1_diagonal_R}  &=  \sum_{k=0}^q \frac{(-q)^{\uparrow k}}{k!} \alpha_1! \cdots \alpha_{n-1}! \, \frac{n^{\uparrow (\alpha_n +k)}}{n^{\uparrow (m+k)} n^{\uparrow l}} \\
&= \alpha_1! \cdots \alpha_{n-1}! \, \frac{n^{\uparrow \alpha_n}}{n^{\uparrow m} n^{\uparrow l}}\sum_{k=0}^q \frac{(-q)^{\uparrow k} (n+\alpha_n)^{\uparrow k}}{k! \, (n+m)^{\uparrow k}} \\
&=\alpha_1! \cdots \alpha_{n-1}! \, \frac{n^{\uparrow \alpha_n}}{n^{\uparrow m} n^{\uparrow l}} \cdot \frac{(m-\alpha_n)^{\uparrow q}}{(n+m)^{\uparrow q}}.
\end{align*}
This concludes the proof for the case when $s = n$.
\end{proof}

\begin{example}
Let $q$ be a positive integer. 
\[
\mathbb{E} \left[ r_{ss}^q \right]= \begin{cases}
\frac{n-1}{n+q-1} & \text{if $1 \le s<n$}; \\
0 & \text{if $s=n$}.
\end{cases}
\]
\end{example}

\begin{example}
If $n > 2$, then 
\begin{align*}
\mathbb{E} \left[r_{12}^2 r_{n1}^2  \overline{r_{n2}}^2 r_{22}^3\right]
= \mathbb{E} \left[p_{12}^2 p_{n1}^2  \overline{p_{n2}}^2 r_{22}^3\right]
&=\frac{4}{n(n+4)(n+5)(n+6)}, \\
\mathbb{E} \left[r_{12}^2 r_{n1}^2  \overline{r_{n2}}^2 r_{nn}^3\right] 
=\mathbb{E} \left[p_{12}^2 p_{n1}^2  \overline{p_{n2}}^2 r_{nn}^3\right] 
&= \frac{96}{n^{\uparrow 7}}.
\end{align*}
\end{example}

\begin{proposition} \label{prop:two_r}
Let $m,l$ be nonnegative integers, and consider four sequences 
\[
\bm{i}=(i_1,\dots, i_m), \quad \bm{j} =(j_1,\dots, j_m), \quad
\bm{i}'=(i_1',\dots, i_l'), \quad \bm{j}' =(j_1',\dots, j_l').
\]
Then, for each $1 \le s \le n$, the moment 
\[
\mathbb{E} \left[p_{i_1 j_1} \cdots p_{i_m j_m} \overline{p_{i_1' j_1'} \cdots p_{i_l' j_l'}} |r_{ss}|^2 \right] 
\]
survives only if $\bm{i} \sqcup \bm{j}'$ is a permutation of $\bm{j} \sqcup \bm{i}'$.
In this case, the moment is equal to 
\begin{align*}
&\alpha_1 ! \cdots \alpha_{n-1}! \,   \frac{n^{\uparrow \alpha_n}}{n^{\uparrow (m+1)} n^{\uparrow (l+1)}} \\
& \times 
\begin{cases}
\Big( (n+m)(n+l) -(\alpha_s+1)(2n+m+l) +(\alpha_s+1)(\alpha_s+2)\Big) &  \text{if $s<n$}; 
\\
\Big( n +(m-\alpha_n)(l-\alpha_n) + \alpha_n \Big) & \text{if $s=n$},    
\end{cases}
\end{align*}
where $\alpha_k:=\mult_{k}(\bm{i}\sqcup \bm{j}') = \mult_{k}(\bm{j}\sqcup\bm{i}')$
for all $k=1,2,\dots, n$.
\end{proposition}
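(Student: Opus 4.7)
The plan is to reduce the computation to \cref{thm:momentsP} via the identity $r_{ss} = 1 - p_{ss}$, so that
\[
|r_{ss}|^2 = (1-p_{ss})(1-\overline{p_{ss}}) = 1 - p_{ss} - \overline{p_{ss}} + p_{ss}\overline{p_{ss}}.
\]
Substituting into the moment and using linearity of expectation produces four expectations of monomials in $(p_{ij})$ and $(\overline{p_{ij}})$, each directly handled by \cref{thm:momentsP}. The vanishing condition is uniform across the four terms: appending $s$ simultaneously to $\bm{i}$ and $\bm{j}$, or to $\bm{i}'$ and $\bm{j}'$, or to all four sequences, preserves the property that $\bm{i} \sqcup \bm{j}'$ is a permutation of $\bm{j} \sqcup \bm{i}'$. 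Hence either all four contributions vanish or none do, giving the stated support condition.

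Next I would bookkeep the shifts in $(m, l, \alpha_k)$ produced by each monomial. For $\mathbb{E}[\cdots]$ the parameters are unchanged; for $\mathbb{E}[\cdots p_{ss}]$, $m \mapsto m+1$ and $\alpha_s \mapsto \alpha_s+1$; symmetrically for $\mathbb{E}[\cdots \overline{p_{ss}}]$ with $l \mapsto l+1$; and for $\mathbb{E}[\cdots p_{ss}\overline{p_{ss}}]$ both $m$ and $l$ increase by $1$ while $\alpha_s \mapsto \alpha_s+2$. The crucial split between $s<n$ and $s=n$ arises from the asymmetric role of the index $n$ in \cref{thm:momentsP}: when $s<n$ only the factorial $\alpha_s!$ shifts, contributing ratios $\alpha_s+1$ and $(\alpha_s+1)(\alpha_s+2)$, while $n^{\uparrow \alpha_n}$ stays fixed; when $s=n$ it is instead $n^{\uparrow \alpha_n}$ that shifts to $n^{\uparrow(\alpha_n+1)}$ and $n^{\uparrow(\alpha_n+2)}$, contributing the replacements $n+\alpha_n$ and $(n+\alpha_n)(n+\alpha_n+1)$.

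Finally I would factor out the common prefactor $\alpha_1!\cdots\alpha_{n-1}!\, n^{\uparrow \alpha_n}/(n^{\uparrow(m+1)} n^{\uparrow(l+1)})$, using $n^{\uparrow(m+1)}=(n+m)\,n^{\uparrow m}$ to place the four terms over a common denominator. For $s<n$ the residual bracket is
\[
(n+m)(n+l) - (\alpha_s+1)(n+l) - (\alpha_s+1)(n+m) + (\alpha_s+1)(\alpha_s+2),
\]
which immediately rearranges to the claimed expression. For $s=n$ the analogous bracket $bc-ab-ac+a(a+1)$, with $a=n+\alpha_n$, $b=n+m$, $c=n+l$, collapses via the elementary identity $bc-a(b+c)+a(a+1)=(b-a)(c-a)+a$ to $(m-\alpha_n)(l-\alpha_n)+n+\alpha_n$, as required.

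The main obstacle is essentially bookkeeping: one must track carefully how each of $m$, $l$, $\alpha_s$ and (when $s=n$) $\alpha_n$ changes under each of the four monomial insertions, and treat the $s=n$ case separately because the $n$-th index enters \cref{thm:momentsP} through a rising factorial rather than through an ordinary factorial. Once this is handled cleanly, the algebraic simplifications are a short computation.
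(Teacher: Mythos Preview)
Your proposal is correct and follows exactly the approach the paper takes: expand $|r_{ss}|^2 = 1 - p_{ss} - \overline{p_{ss}} + p_{ss}\overline{p_{ss}}$ and apply \cref{thm:momentsP} to each of the four resulting terms. The paper explicitly omits the bookkeeping you carried out, so you have in fact supplied the details it leaves to the reader; your tracking of the parameter shifts and the final algebraic simplifications (including the $(b-a)(c-a)+a$ identity for $s=n$) are all correct.
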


\begin{proof}
Expand $|r_{ss}|^2 = 1- p_{ss} -\overline{p_{ss}} +p_{ss}\overline{p_{ss}}$  and apply Theorem~\ref{thm:momentsP}.
The detailed calculations are omitted.
\end{proof}

\begin{example} \label{example:two_r}
    Let $n>1$. From Proposition~\ref{prop:two_r} we have
    \[
    \mathbb{E}[|r_{nn}|^2] = \frac{1}{n}, \quad
    \mathbb{E}[p_{11} |r_{nn}|^2] =\frac{1}{n(n+1)}, \quad 
    \mathbb{E}[p_{11} \overline{p_{11}}|r_{nn}|^2]  = \frac{2}{n^2(n+1)}.
    \]
    Using these results, we obtain the following:
    \begin{align*}
     \mathbb{E}[|r_{11} r_{nn}|^2] &= \mathbb{E}[(1-p_{11})(1-\overline{p_{11}}) |r_{nn}|^2]  \\
     &= \frac{1}{n} -2 \cdot \frac{1}{n(n+1)}+ \frac{2}{n^2(n+1)} \\
     &= \frac{n^2-n+2}{n^2(n+1)}.
    \end{align*}
\end{example}

\begin{proposition}
Let $m$ be a positive integer, and consider two sequences 
\[
\bm{i}=(i_1,\dots, i_m), \quad \bm{j} =(j_1,\dots, j_m).
\]
Assume that the components of $\bm{i}$ are distinct
and that none of them equals $n$.
Then the moment
\[
\mathbb{E} \left[ r_{i_1 j_1} r_{i_2 j_2} \cdots r_{i_m j_m} \right]
\]
vanishes unless $\bm{j}$ is a permutation of $\bm{i}$. 
In this case, the moment is equal to
\begin{equation} \label{eq:ascention_proto}
\sum_{t=0}^f (-1)^{m-t} \binom{f}{t} \frac{1}{n^{\uparrow (m-t)}},
\end{equation}
where $f=f(\bm{i},\bm{j})=|\{h \in \{1,2,\dots,m\} | \ i_h=j_h \}|$.
\end{proposition}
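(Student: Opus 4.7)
The plan is to reduce the moments of $r$-entries to moments of $p$-entries and then invoke \cref{thm:momentsP}. The two standing hypotheses, namely that the components of $\bm{i}$ are distinct and that $n \notin \bm{i}$, are designed to make both this reduction and the subsequent application particularly clean.

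First I would establish the unified identity
\[
r_{i_h j_h} = \delta_{i_h j_h} - p_{i_h j_h},
\]
valid for every $j_h \in \{1,\dots,n\}$ whenever $i_h<n$. When $j_h<n$ this is the first case of \eqref{matrixR}, while when $j_h=n$ one has $\delta_{i_h n}=0$ and $p_{i_h n}=-x_{i_h}=-r_{i_h n}$ from \eqref{matrixP}. Expanding the product distributively,
\[
\prod_{h=1}^m r_{i_h j_h} = \sum_{S \subseteq \{1,\dots,m\}} (-1)^{|S|} \Bigl(\prod_{h \notin S} \delta_{i_h j_h}\Bigr) \prod_{h \in S} p_{i_h j_h}.
\]
The Kronecker factors force $i_h=j_h$ for every $h \notin S$, i.e.\ $S \supseteq F^c$, where $F := \{h: i_h=j_h\}$ has size $f$. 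So only subsets of the form $S=F^c \sqcup T$ with $T \subseteq F$ contribute.

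Next I would apply \cref{thm:momentsP} with $l=0$ (empty conjugate sequences) to each surviving term. It asserts that $\mathbb{E}[\prod_{h \in S} p_{i_h j_h}]$ vanishes unless $(j_h)_{h \in S}$ is a permutation of $(i_h)_{h \in S}$; combined with the delta constraints, this forces $\bm{j}$ to be a permutation of $\bm{i}$ overall, proving the vanishing statement. Assuming now that $\bm{j}$ is a permutation of $\bm{i}$, the distinctness of $\bm{i}$ gives multiplicities $\alpha_k \in \{0,1\}$ (hence $\alpha_k!=1$), and the hypothesis $n \notin \bm{i}$ gives $\alpha_n=0$. \cref{thm:momentsP} therefore collapses to
\[
\mathbb{E}\Bigl[\prod_{h \in S} p_{i_h j_h}\Bigr] = \frac{1}{n^{\uparrow |S|}}.
\]

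Finally, writing $|S|=m-f+t'$ with $t'=|T|$, grouping subsets $T \subseteq F$ by cardinality contributes the binomial coefficient $\binom{f}{t'}$, and the substitution $t=f-t'$ yields
\[
\mathbb{E}\Bigl[\prod_{h=1}^m r_{i_h j_h}\Bigr] = \sum_{t'=0}^f (-1)^{m-f+t'} \binom{f}{t'} \frac{1}{n^{\uparrow (m-f+t')}} = \sum_{t=0}^f (-1)^{m-t} \binom{f}{t} \frac{1}{n^{\uparrow (m-t)}},
\]
which is exactly \eqref{eq:ascention_proto}. The argument is essentially bookkeeping once \cref{thm:momentsP} is in hand; the only point requiring mild care is verifying the unified relation $r_{ij}=\delta_{ij}-p_{ij}$ uniformly across the two cases $j<n$ and $j=n$, and confirming that the distinctness of $\bm{i}$ together with $n \notin \bm{i}$ is exactly what is needed to make the prefactor of \cref{thm:momentsP} reduce to $1/n^{\uparrow |S|}$.
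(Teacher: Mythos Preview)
Your proof is correct and follows essentially the same approach as the paper: expand $r_{i_h j_h}=\delta_{i_h j_h}-p_{i_h j_h}$, reduce to subsets containing the non-fixed positions, and apply \cref{thm:momentsP} with $l=0$. One minor remark: the ``unified relation'' you verify case by case is simply the defining identity $R=I-P$, so no case analysis is needed there.
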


\begin{proof}
We may assume that $i_h =j_h$ for $1 \le h \le f$, and $i_h \not=j_h$ for $f+1 \le h \le m$.
We define:
\begin{gather*}    
\bm{f}:= (i_1,i_2, \dots, i_f), \\
\bm{i}_0:= (i_{f+1}, i_{f+2}, \dots, i_{m}), \quad 
\bm{j}_0:= (j_{f+1}, j_{f+2}, \dots, j_{m}).
\end{gather*}
Since the components of $\bm{f}$ are distinct, 
the sequence $\bm{f}$ can be identified with the set $\{i_1,i_2,\dots,i_f\}$.
We then have:
\begin{align*}
    \mathbb{E} \left[ r_{i_1 j_1} r_{i_2 j_2} \cdots r_{i_m j_m} \right]
    &= \mathbb{E} \left[\prod_{i \in \bm{f}} (1- p_{i, i}) \cdot \prod_{h=f+1}^m (-p_{i_h, j_h}) \right] \\
    &= \sum_{\bm{k}: \bm{k} \subset \bm{f}} \mathbb{E} \left[\prod_{i \in \bm{k}} (-p_{i,i}) \cdot\prod_{h=f+1}^m (-p_{i_h,j_h}) \right],
\end{align*}
where $\bm{k}$ runs over all subsets of the set $\bm{f}$.
Applying Theorem~\ref{thm:momentsP} to each term,
we find that a term corresponding to $\bm{k}$ is nonzero only if $\bm{k} \sqcup \bm{j}_0$ is a permutation of $\bm{k} \sqcup \bm{i}_0$. In other words, $\bm{j}$ must be a permutation of $\bm{i}$.
Furthermore, the calculation continues as follows:
\[
  \mathbb{E} \left[ r_{i_1 j_1} r_{i_2 j_2} \cdots r_{i_m j_m} \right]
  = \sum_{s=0}^f \binom{f}{s} (-1)^{s+m-f} 
 \frac{1}{n^{\uparrow (s+m-f)}}.
\]
The desired form is obtained by changing $s \mapsto t =f-s$.
\end{proof}

The following example plays an important theoretical role in Section \ref{section:Wg_recursive}.

\begin{example} \label{ex:permutation_r}
For $k<n$ and $\sigma \in S_k$, we have
 \[
 \mathbb{E}[r_{1\sigma(1)} \cdots r_{k \sigma(k)}]= \sum_{t=0}^{\fix{\sigma}} (-1)^{k-t} \binom{\fix{\sigma}}{t} \frac{1}{n^{\uparrow (k-t)}}, 
 \]
 where $\fix{\sigma}$ is the number of fixed points in $\sigma$:
 \[
 \fix{\sigma} =|\{i \in \{1,2,\dots,k\} \ | \ \sigma(i)=i\}|.
 \]
\end{example}

\begin{remark}
We can express the function given in \eqref{eq:ascention_proto}
using Kummer's confluent hypergeometric function
\[
M(a,b,z)= \sum_{k=0}^\infty \frac{a^{\uparrow k}}{b^{\uparrow k}} \frac{z^k}{k!}. 
\]
Indeed, 
\[
\sum_{t=0}^f (-1)^{m-t} \binom{f}{t} \frac{1}{n^{\uparrow (m-t)}}
= \frac{(-1)^{m-f}}{n^{\uparrow (m-f)}} M(-f,n+m-f,1).
\]
\end{remark}

\subsection{Calculation of Haar-distributed matrices from reflection matrices}\label{sec:compute-integral}

The following lemma is a reformulation of Lemma \ref{lem:BNN}.

\begin{lemma}
\label{lem:BNN_g}
Let $X_1,X_2, \dots, X_n$ be independent random vectors, where 
each $X_k$ is uniformly distributed on the unit sphere of $\mathbb{C}^k$.
For each \(1 \le k \le n\), we denote \(R_k\) as the corresponding reflection matrix of size \(k\), defined similarly to \eqref{matrixR},  
and define an \(n \times n\) matrix \(\tilde{R}_k = R_k \oplus I_{n-k}\). 
Then, a Haar-distributed unitary matrix $U$ in $\UU(n)$ has the same distribution as
the product $\tilde{R}_n \tilde{R}_{n-1}  \cdots \tilde{R}_1$.
In particular, the last $k$ columns of $U$ match those of  $\tilde{R}_n \tilde{R}_{n-1} \cdots \tilde{R}_{n-k+1}$.
\end{lemma}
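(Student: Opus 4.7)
The plan is to argue by induction on $n$, using \cref{lem:BNN} as the engine for the inductive step. The base case $n=1$ is immediate: $\tilde R_1 = R_1 = X_1$ is uniform on the unit sphere of $\mathbb{C}$, which coincides with the Haar measure on $\UU(1)$.

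For the inductive step, I would suppose the claim has been established in dimension $n-1$. Applied to the independent samples $X_1,\dots,X_{n-1}$ and the $(n-1)$-dimensional embeddings of the reflections, the inductive hypothesis produces a matrix $V \in \UU(n-1)$ that is Haar-distributed, is a deterministic function of $X_1,\dots,X_{n-1}$, and is therefore independent of $X_n$. Invoking \cref{lem:BNN} with this $V$ and the uniform vector $X_n$ then shows that $U := R_n \tilde V$, where $\tilde V = V \oplus (1)$ and $R_n$ is the reflection built from $X_n$ as in \eqref{matrixR}, is Haar-distributed on $\UU(n)$.

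The main bookkeeping step, which I expect to be the only slightly delicate point, is to verify the identity
\[
V \oplus (1) = \tilde R_{n-1} \tilde R_{n-2} \cdots \tilde R_1.
\]
This reduces to the elementary observation that $(R_j \oplus I_{n-1-j}) \oplus (1) = R_j \oplus I_{n-j} = \tilde R_j$, together with the fact that appending a common trailing identity block distributes over matrix products. Substituting the inductive expression for $V$ into the formula $U = R_n \tilde V$ then yields $U = \tilde R_n \tilde R_{n-1} \cdots \tilde R_1$, closing the induction.

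For the ``in particular'' assertion, I would note that for every index $j$ with $j \le n-k$ the matrix $\tilde R_j = R_j \oplus I_{n-j}$ has, with respect to the splitting $\mathbb{C}^n = \mathbb{C}^{n-k} \oplus \mathbb{C}^k$, the block form $A_j \oplus I_k$, since $n-j \ge k$. Such matrices form a multiplicative subsemigroup, so the product $Q := \tilde R_{n-k} \tilde R_{n-k-1} \cdots \tilde R_1$ can be written as $Q' \oplus I_k$ for some $Q' \in \UU(n-k)$. Right multiplication by any matrix of this shape leaves the last $k$ columns unchanged, so the last $k$ columns of $U = (\tilde R_n \cdots \tilde R_{n-k+1}) Q$ agree with those of $\tilde R_n \cdots \tilde R_{n-k+1}$, as claimed.
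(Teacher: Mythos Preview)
Your argument is correct and is exactly the induction that the paper implicitly intends: the paper states the lemma without proof, calling it merely ``a reformulation of \cref{lem:BNN}'', and your inductive unpacking (together with the block-form observation for the last $k$ columns) is precisely how one fills in that reformulation.
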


Using this lemma, one can calculate moments of polynomial functions on the unitary group \(\UU(n)\) by a method different from the Weingarten calculus, employing Theorem~\ref{thm:momentsP}.  
In this subsection, we provide some examples of such calculations, which indicate that the approach described in this paper allows one to recover Weingarten functions.

Let $n \ge 2$, and let $U=(u_{ij})$ be a Haar-distributed unitary matrix on $\UU(n)$.
From Lemma~\ref{lem:BNN_g}, the last two columns of $U$ are expressed as follows:
\[
u_{i,n-1} = \sum_{j=1}^{n-1} r_{i,j} s_{j,n-1}, \qquad u_{i,n} =r_{i,n}
\qquad (1 \le i \le n),
\]
where $r_{i,j}$ are matrix elements of $R_n$ and $s_{i,j}$ are those of $R_{n-1}$.
Let us calculate the following moment.
\[
\mathbb{E} \left[ u_{n-1,n-1} u_{n,n} \overline{u_{n-1,n} u_{n, n-1}} \right].
\]
By applying the Weingarten calculus \eqref{eq:Wg_F}, one immediately obtains 
\[\mathbb{E} \left[ u_{n-1,n-1} u_{n,n} \overline{u_{n-1,n} u_{n, n-1}} \right]=\Wg_{2,n}( (1 \ 2) ) = - \frac{1}{n(n+1)(n-1)},\]
but here, we shall deliberately take a longer route to calculate it.
Noting that \((r_{ij})\) and \((s_{ij})\) are independent, it follows that
\begin{align*}
   &  \mathbb{E} \left[ u_{n-1,n-1} u_{n,n} \overline{u_{n-1,n} u_{n, n-1}} \right] \\
&=\sum_{j=1}^{n-1} \sum_{l=1}^{n-1} \mathbb{E} [r_{n-1,j} r_{n,n} \overline{r_{n-1,n} r_{n,l}}]
\mathbb{E}[s_{j,n-1} \overline{s_{l,n-1}}].
\end{align*}
Example~\ref{example:two_r} or Proposition~\ref{prop:moment_R_ex1} give that 
\begin{align*}
\mathbb{E}[s_{j,n-1} \overline{s_{l,n-1}}] &=  \frac{\delta_{j,l}}{n-1} && \text{for $j,l \le n-1$}; \\ 
\mathbb{E} [r_{n-1,j} r_{n,n} \overline{r_{n-1,n} r_{n,j}}]&=0 && \text{for $j<n-1$},
\end{align*}
and 
\begin{align*}
\mathbb{E} [r_{n-1,n-1} r_{n,n} \overline{r_{n-1,n} r_{n,n-1}}] 
&= \mathbb{E} [ (1-p_{n-1,n-1}) \overline{p_{n-1,n}p_{n,n-1}} r_{n,n}] \\
&= -\frac{1}{n(n+1)}.
\end{align*}
Thus, we have obtained
\begin{align*}
    &\mathbb{E} \left[ u_{n-1,n-1} u_{n,n} \overline{u_{n-1,n} u_{n, n-1}} \right] \\
    &=\mathbb{E} [r_{n-1,n-1} r_{n,n} \overline{r_{n-1,n} r_{n,n-1}}]
\mathbb{E}[s_{n-1,n-1} \overline{s_{n-1,n-1}}] \\
    &= -\frac{1}{n(n+1)(n-1)}
\end{align*}
as desired.

Let us perform a similar calculation for $\mathbb{E} \left[ u_{n-1,n-1} u_{n,n} \overline{u_{n-1,n-1} u_{n, n}} \right]$, which equals $\Wg_{2,n}(e_2)=\frac{1}{(n-1)(n+1)}$ by Weingarten calculus.
We see that:
\begin{align*}
\mathbb{E} \left[ u_{n-1,n-1} u_{n,n} \overline{u_{n-1,n-1} u_{n, n}} \right]
&= \sum_{j=1}^{n-1} \sum_{l=1}^{n-1} \mathbb{E} [r_{n-1,j} r_{n,n} \overline{r_{n-1,l} r_{n,n}}]
\mathbb{E}[s_{j,n-1} \overline{s_{l,n-1}}] \\
&= \frac{1}{n-1} \sum_{j=1}^{n-1} \mathbb{E} [r_{n-1,j} r_{n,n} \overline{r_{n-1,j} r_{n,n}}].
\end{align*}
Using Proposition~\ref{prop:two_r} (see also Example~\ref{example:two_r}), we have:
\[
\mathbb{E} [r_{n-1,j} r_{n,n} \overline{r_{n-1,j} r_{n,n}}] =
\begin{cases}
    \frac{1}{n^2(n+1)} & \text{if $j <n-1$}; \\
    \frac{n^2-n+2}{n^2(n+1)} & \text{if $j =n-1$}.
\end{cases}
\]
This gives
\begin{align*}    
\mathbb{E} \left[ u_{n-1,n-1} u_{n,n} \overline{u_{n-1,n-1} u_{n, n}} \right]
&= \frac{1}{n-1} \left( (n-2) \cdot  \frac{1}{n^2(n+1)} + \frac{n^2-n+2}{n^2(n+1)}
\right) \\
&= \frac{1}{(n-1)(n+1)},
\end{align*}
as desired.

This approach works for any integral of a monomial in $u_{ij}$ and their conjugates,
provided it has a single matching.
Such an integral results in a Weingarten function, and this provides a method to recompute the Weingarten function

It remains to identify the underpinning algebraic and combinatorial phenomena that allow one to  systematically rederive the known results on Weingarten functions, which we leave as an open problem. 
For a related result, we also refer to Proposition~\ref{prop:Wg_asasas}.

\section{A new recursive formula for Weingarten functions} \label{section:Wg_recursive}

\subsection{A recursive formula}

In this section, we rely on the achievements of the previous paragraphs to uncover a new relation between the Weingarten functions
$\Wg_{k,n}$ and $\Wg_{k,n-1}$. 
Let $\mathbb{C}[S_k]$ denote the group algebra of the symmetric group,
which consists of all complex-valued functions on $S_k$.
The group algebra is equipped with a convolution product defined as follows:
\[
(f_1*f_2)(\pi)= \sum_{\sigma \in S_k} f_1(\sigma) f_2(\sigma^{-1} \pi) \qquad (\pi \in S_k).
\]
Here, $f_1$ and $f_2$ are elements of $\mathbb{C}[S_k]$.
If $f_1$ and $f_2$ are class functions on $S_k$, which means that they are constant on each conjugacy class of $S_k$, 
then the convolution can also be written as
\[
(f_1*f_2)(\pi)= \sum_{\sigma \in S_k} f_1(\sigma) f_2(\sigma \pi),
\]
and it follows that $f_1*f_2=f_2*f_1$.
We will use this commutativity property without further notice.

We introduce a new element of $\mathbb{C}[S_k]$, which has already appeared in Example~\ref{ex:permutation_r}.

\begin{definition} \label{def:function_a}
Let $k$ and $n$ be positive integers.
We define the \emph{ascension function} $\Raise_{k,n}$ in $\mathbb{C}[S_k]$ by
 \[
\Raise_{k,n}(\sigma)= \sum_{t=0}^{\fix{\sigma}} (-1)^{k-t} \binom{\fix{\sigma}}{t} \frac{1}{n^{\uparrow (k-t)}}
\qquad (\sigma \in S_k),
 \]
 where $\fix{\sigma}$ is the number of fixed points in $\sigma$:
 \[
 \fix{\sigma} =|\{i \in \{1,2,\dots,k\} \ | \ \sigma(i)=i\}|.
 \]
\end{definition}

\begin{example}
For the identity permutation $e_k$ in $S_k$, we have
\[
\Raise_{k,n}(e_k) 
= 1-\frac{k}{n} + \frac{\binom{k}{2}}{n(n+1)} - \cdots +(-1)^k \frac{1}{n(n+1) \cdots(n+k-1)}.
\]
If $\sigma \in S_k$ has no fixed points, then
\[
\Raise_{k,n}(\sigma)= \frac{(-1)^k}{n(n+1) \cdots (n+k-1)}.
\]
\end{example}

Recall the Weingarten function which already appeared in the introduction. Suppose that $k \le n$.
The Weingarten function for the unitary group $\UU(n)$ is given by
\[
\Wg_{k,n}(\pi) = \mathbb{E} \left[ u_{11} u_{22} \cdots u_{kk} 
\overline{u_{1\pi(1)} u_{2\pi(2)} \cdots u_{k\pi(k)}} \right],
\]
for each permutation $\pi\in S_k$, 
where $U=(u_{ij})$ is a Haar-distributed unitary matrix in $\UU(n)$.
The name `ascent function' for $\Raise_{k,n}$ originates from the following theorem.

\begin{theorem} \label{thm:recursive}
Let $k$ and $n$ be positive integers and suppose $k+1 \le n$. 
Then we have the convolution identity
\[
\Wg_{k,n} =\Raise_{k,n} * \Wg_{k,n-1}.
\]
\end{theorem}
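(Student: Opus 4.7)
The plan is to exploit the virtual-isometry decomposition of \cref{lem:BNN} to reduce both sides to expectations over the reflection $R$ and an independent Haar matrix $V$ on $\UU(n-1)$, then to match the reflection contribution against $\Raise_{k,n}$ using the spherical moments of \cref{lem:Wg_for_R}.

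First, write $U = R\tilde V$ with $\tilde V = V \oplus (1)$. Since $k+1 \le n$, every index in $\{1,\dots,k\}$ is at most $n-1$, so for $i,j \le k$ we have $u_{ij} = \sum_{l=1}^{n-1} r_{il} v_{lj}$ (the $l=n$ term vanishes because $\tilde v_{nj}=0$ for $j \le n-1$). Expanding $\Wg_{k,n}(\pi)=\mathbb{E}[\prod_i u_{ii}\overline{u_{i\pi(i)}}]$ and invoking independence of $R$ and $V$ gives
\[
\Wg_{k,n}(\pi) = \sum_{\bm l,\bm m \in \{1,\dots,n-1\}^k} \mathbb{E}\Big[\prod_{i=1}^k r_{il_i}\overline{r_{im_i}}\Big]\,\mathbb{E}\Big[\prod_{i=1}^k v_{l_i\, i}\overline{v_{m_i\,\pi(i)}}\Big].
\]
The $V$-expectation is evaluated by the Weingarten formula \eqref{eq:Wg_F} applied to $\UU(n-1)$, which is legitimate because $k \le n-1$; the factor $\delta(\tau(p),\pi(p))$ forces $\tau=\pi$, and the factors $\delta(l_{\sigma(p)},m_p)$ collapse the sum over $\bm m$. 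One obtains $\Wg_{k,n}(\pi) = \sum_{\sigma \in S_k} \Wg_{k,n-1}(\sigma\pi^{-1})\,B(\sigma)$, where $B(\sigma):=\sum_{\bm l \in \{1,\dots,n-1\}^k}\mathbb{E}[\prod_{i=1}^k r_{i l_i}\overline{r_{i\,l_{\sigma(i)}}}]$. Since $\Wg_{k,n-1}$ is a class function and $\sigma\pi^{-1}$ and $\sigma^{-1}\pi$ always share a cycle type, this equals $(B*\Wg_{k,n-1})(\pi)$, so the theorem reduces to proving the identity $B = \Raise_{k,n}$.

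To evaluate $B(\sigma)$, regroup the integrand as $\prod_{j=1}^k r_{j\,l_j}\overline{r_{\sigma^{-1}(j),\,l_j}}$ and apply inclusion-exclusion via $[l_j \in \{1,\dots,n-1\}] = 1-[l_j=n]$, expanding the index set to $\{1,\dots,n\}^k$ at the cost of a sum over subsets $S \subseteq \{1,\dots,k\}$ recording which coordinates are forced to $n$. For each coordinate $l_j$ not so forced, unitarity of $R$ yields $\sum_{l=1}^n r_{jl}\overline{r_{\sigma^{-1}(j),l}} = \delta(j,\sigma^{-1}(j))$, so only subsets $S$ with $\Fix(\sigma)^c \subseteq S$ contribute. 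Writing $S = \Fix(\sigma)^c \sqcup T$ with $T \subseteq \Fix(\sigma)$, using $r_{jn}=x_j$ from \eqref{matrixR}, and noting that $\sigma^{-1}$ permutes $\Fix(\sigma)^c$ (so $\prod_{j \in \Fix(\sigma)^c} x_j\overline{x_{\sigma^{-1}(j)}}$ collapses to $\prod_{j \in \Fix(\sigma)^c} |x_j|^2$), I obtain
\[
B(\sigma) = (-1)^{k-\fix{\sigma}}\,\mathbb{E}\Big[\prod_{j \in \Fix(\sigma)^c} |x_j|^2\,\prod_{j \in \Fix(\sigma)}(1-|x_j|^2)\Big].
\]
Expanding the second product and invoking \cref{lem:Wg_for_R} produces a finite sum that, after the change of index $t=\fix{\sigma}-u$, matches the definition of $\Raise_{k,n}(\sigma)$ in \cref{def:function_a}.

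The main obstacle is the combinatorial reorganization in this last step: one must recognize that inclusion-exclusion over which coordinates are forced to $n$ interacts with the unitarity of $R$ precisely so as to isolate $\Fix(\sigma)$, and that the action of $\sigma^{-1}$ on its complement collapses every mixed cross-term $x_j\overline{x_{\sigma^{-1}(j)}}$ into a clean modulus $|x_j|^2$. Once this reorganization is in place, \cref{lem:Wg_for_R} together with the class-function property of $\Wg_{k,n-1}$ complete the proof mechanically.
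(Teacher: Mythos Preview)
Your proof is correct and follows essentially the same route as the paper: decompose $U=R\tilde V$ via \cref{lem:BNN}, apply the Weingarten formula on $\UU(n-1)$ to the $V$-factor, and then identify the remaining $R$-sum with $\Raise_{k,n}$ by using unitarity of $R$ to complete each column sum to $\{1,\dots,n\}$ and invoking the spherical moments of \cref{lem:Wg_for_R}. The only cosmetic differences are that you work with $\overline{u_{i\pi(i)}}$ rather than $\overline{u_{\pi(i)i}}$ (harmless by the class-function property you note) and that you fold what the paper isolates as \cref{lem:expression_A} directly into the computation via inclusion--exclusion over the subset $S$ of coordinates forced to $n$; the paper instead factors the $\bm l$-sum first and writes each factor as $\delta_{i,\sigma(i)}-x_i\overline{x_{\sigma(i)}}$ before expanding, which is the same calculation in a slightly different order.
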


\begin{remark}
Theorem~\ref{thm:recursive} does not hold without the assumption $k+1 \le n$. In the case where $k+1 > n$, the Weingarten function $\Wg_{k,n-1}$ on the right-hand side is not well-defined. 
Although the Weingarten functions can still be extended in that case, the above formula does not hold.
We will elaborate on these points in Section~\ref{subsec:low_dim}.
\end{remark}

The following lemma will be used for the proof of Theorem~\ref{thm:recursive}.

\begin{lemma} \label{lem:expression_A}
Let $X=\trans{(x_1,\dots,x_n)}$ be a uniform vector on the unit sphere of $\mathbb{C}^n$.
Then, for $k \le n$ and any $\sigma \in S_k$, we have
\[
\Raise_{k,n}(\sigma)=\mathbb{E} \left[
\prod_{i=1}^k \left( \delta_{i,\sigma(i)} - x_i \overline{x_{\sigma(i)}} \right) \right].
\]
\end{lemma}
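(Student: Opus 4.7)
The plan is to expand the product inside the expectation into monomials in $x_j$ and $\overline{x_j}$, then evaluate each resulting moment via \cref{lem:Wg_for_R}. Set $F := \{i \in \{1,\dots,k\} : \sigma(i) = i\}$, so $|F| = \fix{\sigma} =: f$. For $i \in F$ the $i$-th factor equals $1 - |x_i|^2$, while for $i \notin F$ it equals $-x_i \overline{x_{\sigma(i)}}$. First I would expand $\prod_{i \in F}(1 - |x_i|^2) = \sum_{T \subseteq F}(-1)^{|T|}\prod_{i \in T}|x_i|^2$ by the distributive law, producing
\[
\prod_{i=1}^k (\delta_{i,\sigma(i)} - x_i \overline{x_{\sigma(i)}}) = (-1)^{k-f} \sum_{T \subseteq F}(-1)^{|T|} \prod_{i \in T}|x_i|^2 \prod_{i \in \{1,\dots,k\} \setminus F} x_i \overline{x_{\sigma(i)}}.
\]

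Next I would apply \cref{lem:Wg_for_R} termwise. The key observation is that $\sigma$ restricts to a fixed-point-free permutation of $\{1,\dots,k\} \setminus F$, so in $\prod_{i \notin F} x_i \overline{x_{\sigma(i)}}$ each index $j \notin F$ contributes exactly one $x_j$ (from $i=j$) and exactly one $\overline{x_j}$ (from $i = \sigma^{-1}(j)$); similarly each $i \in T$ contributes one of each via $|x_i|^2$. Hence the $x$- and $\overline{x}$-exponents match, each nonzero exponent equals $1$, and the total degree is $m = (k-f) + |T|$. \cref{lem:Wg_for_R} then gives $1/n^{\uparrow (k-f+|T|)}$ for each summand; the hypothesis $k \le n$ is exactly what is needed to invoke it.

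Finally I would group the subsets $T \subseteq F$ by cardinality $j = |T|$, yielding $\binom{f}{j}$ equal terms, and reindex by $t := f - j$. This collapses the sum to $\sum_{t=0}^f (-1)^{k-t}\binom{f}{t}/n^{\uparrow(k-t)}$, which is exactly the definition of $\Raise_{k,n}(\sigma)$ from \cref{def:function_a}.

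I do not anticipate a serious obstacle: once the expansion is set up, the rest is bookkeeping, the only delicate point being the verification that each monomial has matching $x_j$ and $\overline{x_j}$ multiplicities so that \cref{lem:Wg_for_R} applies with $m_j \in \{0,1\}$. One conceptual point worth flagging is that this lemma does not follow directly from \cref{ex:permutation_r}: the matrix entries $r_{i\sigma(i)}$ carry an extra denominator $1-\overline{x_n}$ absent from the integrand here, so the two integral representations of $\Raise_{k,n}(\sigma)$ are genuinely different even though they evaluate to the same class function.
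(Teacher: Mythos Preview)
Your proposal is correct and follows essentially the same route as the paper's proof: both split the product over fixed points versus non-fixed points, expand $\prod_{i\in F}(1-|x_i|^2)$ over subsets of $F$, observe that $\sigma$ permutes $\{1,\dots,k\}\setminus F$ so that each surviving monomial has all exponents equal to $0$ or $1$, apply \cref{lem:Wg_for_R}, and reindex by $t=f-|T|$. Your closing remark distinguishing this integral from the one in \cref{ex:permutation_r} is a nice observation not made explicit in the paper.
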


\begin{proof}
We denote the set of all fixed points of $\sigma$ by
\[
\Fix (\sigma)= \{i \in\{1,2,\dots,k\} \ | \ \sigma(i)=i\}.
\]
By expanding the product, we have
\begin{align*}
    & \mathbb{E} \left[
\prod_{i=1}^k \left( \delta_{i,\sigma(i)} - x_i \overline{x_{\sigma(i)}} \right) \right]  \\
    &= (-1)^k \, \mathbb{E} \left[  \prod_{i \in \Fix(\sigma)} (|x_i|^2-1) \cdot
    \prod_{j \in \Fix(\sigma)^c} x_j \overline{x_{\sigma(j)}}   \right] \\
    &= \sum_{s=0}^{\fix{\sigma}}  (-1)^{k+\fix{\sigma}-s} \sum_{ \{i_1< \cdots <i_{s} \} \subset \Fix(\sigma)}
    \mathbb{E} \left[ |x_{i_1}|^2 \cdots |x_{i_{s}}|^2  \prod_{j \in \Fix(\sigma)^c}|x_j|^2\right],
\end{align*}
where $\Fix(\sigma)^c :=\{1,2,\dots,k \} \setminus \Fix(\sigma)$ is invariant under $\sigma$.
By Lemma~\ref{lem:Wg_for_R}, each expectation appearing in the sum equals $1/n^{\uparrow (s+k-\fix{\sigma})}$.
Therefore, we obtain
\begin{align*}
    \mathbb{E} \left[
\prod_{i=1}^k \left( \delta_{i,\sigma(i)} - x_i \overline{x_{\sigma(i)}} \right) \right] 
&= \sum_{s=0}^{\fix{\sigma}} (-1)^{k+\fix{\sigma}-s} \binom{\fix{\sigma}}{s} \frac{1}{n^{\uparrow (s+k-\fix{\sigma})}} \\
&=  \sum_{t=0}^{\fix{\sigma}} (-1)^{k-t} \binom{\fix{\sigma}}{t} \frac{1}{n^{\uparrow (k-t)}}.
\end{align*}
\end{proof}

\begin{proof}[Proof of Theorem~\ref{thm:recursive}]
    Let the matrices $U=(u_{ij})$, $R=(r_{ij})$, and $V=(v_{ij})$ be as stated in Lemma~\ref{lem:BNN}.
    Then, for $i,j <n$, the $(i,j)$-th element of $U$ is given by
    \[
    u_{ij} = \sum_{p=1}^{n-1} r_{ip} v_{p j}.   
    \]
    Let $\pi \in S_k$. We consider 
    \[
    \Wg_{k,n}(\pi) = \mathbb{E} \left[ u_{1,1} u_{2,2} \cdots u_{k,k} 
    \overline{u_{\pi(1),1} u_{\pi(2),2} \cdots u_{\pi(k),k}} \right].   
    \]
    Since we assume $k+1 \le n$, the elements of $U$ in the $n$-th row or $n$-th column do not appear in this equation.
    By the independence of $R$ and $V$, we have
    \begin{align*}
     \Wg_{k,n}(\pi) 
    &= \sum_{p_1=1}^{n-1} \cdots \sum_{p_k=1}^{n-1} \sum_{q_1=1}^{n-1} \cdots \sum_{q_k=1}^{n-1}
    \mathbb{E} \left[ r_{1, p_1} \cdots r_{k, p_k} \overline{r_{\pi(1),q_1} \cdots r_{\pi(k), q_k}} \right] \\
    & \qquad \times \mathbb{E} \left[ v_{p_1, 1} \cdots v_{p_k, k} \overline{ v_{q_1, 1} \cdots v_{q_k, k} } \right].
    \end{align*}
    Next, we apply the Weingarten formula \eqref{eq:Wg_F} for the unitary group $\UU(n-1)$ to the last factor:
    \begin{align*}
    &  \mathbb{E} \left[ v_{p_1, 1} \cdots v_{p_k, k} \overline{ v_{q_1, 1} \cdots v_{q_k, k} } \right] \\
    &= \sum_{\tau \in S_k} \delta(p_{\tau^{-1}(1)},q_1) \cdots \delta(p_{\tau^{-1}(k)},q_k) \Wg_{k,n-1}(\tau).
    \end{align*}
    Substituting this into the preceding equation, we obtain 
    \begin{align*}
          &  \Wg_{k,n}(\pi) \\
          &= \sum_{\tau \in S_k} \Wg_{k,n-1}(\tau) \sum_{p_1=1}^{n-1} \cdots \sum_{p_k=1}^{n-1}
    \mathbb{E} \Big[ r_{1,p_1} \cdots r_{k,p_k}
    \overline{r_{\pi\tau(1),p_1} \cdots r_{\pi\tau(k),p_k}} \Big], 
    \end{align*}
which can be written as $\Wg_{k,n} (\pi)= (\widetilde{\Raise} * \Wg_{k,n-1})(\pi)$  with
\[
\widetilde{\Raise}(\sigma) :=  \sum_{p_1=1}^{n-1} \cdots \sum_{p_k=1}^{n-1}
\mathbb{E} \Big[ r_{1,p_1} \cdots r_{k,p_k}
\overline{r_{\sigma(1),p_1} \cdots r_{\sigma(k),p_{k}}} \Big] \qquad (\sigma \in S_k).
\]

The final step in the proof is to show that $\widetilde{\Raise}(\sigma)$ equals $\Raise_{k,n}(\sigma)$, as defined in Definition~\ref{def:function_a}.
Starting from the expression for $\widetilde{\Raise}(\sigma)$, we have
\[
\widetilde{\Raise}(\sigma)
= \mathbb{E} \left[
\left(\sum_{p_1=1}^{n-1} r_{1,p_1}\overline{r_{\sigma(1),p_1}} \right)
\cdots
\left(\sum_{p_k=1}^{n-1} r_{k,p_k}\overline{r_{\sigma(k),p_k}} \right)
\right].
\]
Since $R=(r_{ij})$ is an $n \times n$ unitary matrix, 
the sums can be simplified as follows:
\[
\sum_{p=1}^{n-1} r_{i,p}\overline{r_{\sigma(i),p}} =
\sum_{p=1}^{n} r_{i,p}\overline{r_{\sigma(i),p}} -r_{i,n} \overline{r_{\sigma(i),n}}
= \delta_{i,\sigma(i)}-r_{i,n} \overline{r_{\sigma(i),n}}
\]
for all $1 \le i \le k$.
From \eqref{matrixR}, we know that $r_{i,n}=x_i$.
Substituting this, we obtain
\[
\widetilde{\Raise}(\sigma)= \mathbb{E} \left[ \prod_{i=1}^k \big(\delta_{i,\sigma(i)}-x_{i} \overline{x_{\sigma(i)}}\big) \right].
\]
Together with Lemma~\ref{lem:expression_A}, this shows $\widetilde{\Raise}(\sigma)=\Raise_{k,n}(\sigma)$. 
This completes the proof of Theorem~\ref{thm:recursive}.
\end{proof}

\subsection{Some properties for ascension functions}

In this subsection, we investigate some properties of the function $\Raise_{k,n}:S_k \to \mathbb{C}$.
To do this, we review some fundamental properties of the Weingarten function;
refer to \cite{CMN} and its references for details.

Consider the function $\G_{k,n}:S_k \to \mathbb{C}$ defined by
\[
\G_{k,n}(\pi)= n^{\kappa(\pi)} \qquad (\pi \in S_k),
\]
where $\kappa(\pi)$ denotes the number of cycles in the cycle decomposition of $\pi$.
This function is invertible if and only if $k \le n$, and in this case,
its inverse function is the Weingarten function $\Wg_{k,n}$:
\[
\G_{k,n} * \Wg_{k,n} = \Wg_{k,n} *\G_{k,n} =\delta_{e_k}.
\]
Here, $\delta_{e_k}:S_k \to \{0,1\}$ is the Dirac function at the identity permutation $e_k$:
\[
\delta_{e_k}( \sigma)= \begin{cases}
1 & \text{if $\sigma=e_k$,} \\
0 & \text{otherwise}.
\end{cases}
\]
From Theorem~\ref{thm:recursive}, it immediately follows that
\begin{equation} \label{eq:as_g_Wg}
    \Raise_{k,n} = \G_{k,n-1} * \Wg_{k,n} = \Wg_{k,n} * \G_{k,n-1}
\end{equation}
provided that  $k +1 \le n$.

Next, we consider character expansions for these functions.
A partition of a positive integer $k$ is a weakly decreasing sequence 
$\lambda=(\lambda_1,\lambda_2,\dots,\lambda_l)$ of positive integers 
such that the sum of all parts $\lambda_i$ is equal to $k$.
In this case, we write $\lambda \vdash k$
and $\ell(\lambda)=l$.
For a partition $\lambda \vdash k$ and a complex number $z$, we define
\[
(z\uparrow \lambda)= \prod_{i=1}^{\ell(\lambda)} \prod_{j=1}^{\lambda_i} (z+j-i).
\]
Denote by $\chi^\lambda$ the irreducible character of $S_k$ corresponding to $\lambda$.
Moreover, we define $f^\lambda :=\chi^\lambda(e_k)$,
the value of $\chi^\lambda$ at the identity permutation.
Using these notations, we have the following expansions:
\begin{equation} \label{eq:Wg_char}
    \G_{k,n} = \frac{1}{k!} \sum_{\lambda \vdash k} f^\lambda (n \uparrow \lambda) \chi^\lambda,
\qquad 
\Wg_{k,n} = \frac{1}{k!} \sum_{\lambda \vdash k} \frac{f^\lambda}{(n \uparrow \lambda)} \chi^\lambda.
\end{equation} 
Note that we need to assume $k \le n$ in the latter equation
to ensure that the factor $(n \uparrow \lambda)$ in all terms is not equal to zero.
From these formulas, we can easily obtain the character expansion of the ascension function by using the orthogonality of irreducible characters:
\[
\chi^\lambda * \chi^\mu = \delta_{\lambda,\mu} \frac{k!}{f^\lambda} \chi^\lambda \qquad (\lambda,\mu \vdash k).
\]
Indeed, \eqref{eq:as_g_Wg} implies that
\begin{equation} \label{eq:Raise_char}
    \Raise_{k,n}  = \frac{1}{k!} \sum_{\lambda \vdash k} f^\lambda 
    \left( \prod_{i=1}^{d(\lambda)} \frac{n-1-(\lambda_i'-i)}{ n+(\lambda_i-i)} \right) \chi^\lambda,
\end{equation}
where $(\lambda'_1,\lambda_2',\dots)$ represents the conjugate partition of $\lambda$,
obtained by transposing the partition, viewed as a Young diagram, along its diagonal.
Additionally, $d(\lambda)=|\{i \ge 1 \ | \ \lambda_i \ge i\}|$ denotes the length of the diagonal in the Young diagram $\lambda$.
Here, the ratio in the coefficient can be derived from the following identity:
\[
\frac{((n-1) \uparrow \lambda)}{(n \uparrow \lambda)} = 
\prod_{i=1}^{d(\lambda)} \frac{n-1-(\lambda_i'-i)}{ n+(\lambda_i-i)},
\]
which can be obtained through simple factor cancellation.

Finally, we discuss the inverse of the ascension function.

\begin{definition} \label{def:function_d}
Let $k$ and $n$ be positive integers such that $k \le n$.
We define the \emph{descension function} $\Lower_{k,n}$ in $\mathbb{C}[S_k]$ by
 \[
\Lower_{k,n}(\sigma)= (\sgn \sigma) \sum_{t=0}^{\fix{\sigma}} \binom{\fix{\sigma}}{t} \frac{1}{n^{\downarrow (k-t)}}
\qquad (\sigma \in S_k),
 \]
 where $n^{\downarrow t}$ is the falling factorial defined by
 \[
n^{\downarrow t} = \begin{cases}
    n(n-1) \cdots (n-t+1) & \text{for $t=1,2,\dots$;} \\
1 & \text{for $t=0$}.
\end{cases}
\]
\end{definition}

Note that the definition of the falling factorial requires the assumption $k \le n$.

\begin{proposition}
Let $k$ and $n$ be positive integers such that  $k \le n$.   
Then the following holds:
\begin{equation} \label{eq:Lower_action}
    \Raise_{k,n+1}* \Lower_{k,n} =\delta_{e_k}.
\end{equation}
Moreover, we have 
\[
\Wg_{k,n} = \Lower_{k,n} * \Wg_{k,n+1}.
\]
\end{proposition}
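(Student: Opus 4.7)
The plan is to prove both identities via character expansions, using the identity \eqref{eq:Raise_char} and Theorem~\ref{thm:recursive}. The key is to establish the character expansion
\[
\Lower_{k,n} = \frac{1}{k!}\sum_{\lambda \vdash k} f^\lambda \frac{((n+1)\uparrow \lambda)}{(n\uparrow \lambda)}\chi^\lambda;
\]
once this is in hand, the first identity falls out by orthogonality, and the second follows by one application of Theorem~\ref{thm:recursive}.

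For the character expansion, I would first write $\sgn(\sigma)\Lower_{k,n}(\sigma) = \sum_{t=0}^{\fix{\sigma}}\binom{\fix{\sigma}}{t} n^{-\downarrow(k-t)}$ and use the elementary identity $n^{\downarrow m} = (-1)^m(-n)^{\uparrow m}$ to rewrite this as $\sum_{t=0}^{\fix{\sigma}}(-1)^{k-t}\binom{\fix{\sigma}}{t}(-n)^{-\uparrow(k-t)}$, which is exactly the formal expression $\Raise_{k,-n}(\sigma)$ coming from Definition~\ref{def:function_a}. Since both sides of \eqref{eq:Raise_char} are rational functions of the dimension parameter and agree for infinitely many positive integer values, they agree at the formal value $-n$ (here we need $n\ge k$ so that no denominator vanishes). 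Then I would apply the reflection identity for the content polynomial,
\[
((-n)\uparrow \lambda) = (-1)^{|\lambda|}(n\uparrow \lambda'),
\]
together with $f^\lambda = f^{\lambda'}$ and $\chi^\lambda(\sigma) = \sgn(\sigma)\chi^{\lambda'}(\sigma)$, and re-index the sum by $\mu = \lambda'$. After the sign on the left of the displayed equation cancels with the sign produced by the character identity, the claimed expansion of $\Lower_{k,n}$ emerges.

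With the two expansions in place, the first identity is routine: in the product $\Raise_{k,n+1} * \Lower_{k,n}$, orthogonality $\chi^\lambda * \chi^\mu = \delta_{\lambda,\mu}\frac{k!}{f^\lambda}\chi^\lambda$ collapses the double sum to a single one, the ratios $\frac{(n\uparrow\lambda)}{((n+1)\uparrow\lambda)}$ and $\frac{((n+1)\uparrow\lambda)}{(n\uparrow\lambda)}$ cancel, and the result is $\frac{1}{k!}\sum_\lambda f^\lambda \chi^\lambda = \delta_{e_k}$ by column orthogonality of the character table. For the second identity, Theorem~\ref{thm:recursive} applied with $n$ replaced by $n+1$ (valid because our hypothesis $k\le n$ is exactly $k+1\le n+1$) gives $\Wg_{k,n+1} = \Raise_{k,n+1} * \Wg_{k,n}$, and convolving by $\Lower_{k,n}$ and using \eqref{eq:Lower_action} yields $\Lower_{k,n} * \Wg_{k,n+1} = \delta_{e_k} * \Wg_{k,n} = \Wg_{k,n}$.

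The main obstacle is the first step, in particular justifying the passage from $\Raise_{k,n}$ at positive integer values to the formal value $-n$ and handling the conjugate-partition bookkeeping cleanly. Both are standard manipulations, but they require care: the rational-extension argument must be stated explicitly (the identity \eqref{eq:Raise_char} is an identity of rational functions in the parameter, holding on any open set of values on which both sides make sense), and the reflection $((-n)\uparrow\lambda) = (-1)^k(n\uparrow\lambda')$ must be verified by inspecting the contents $j-i$ over the cells of the Young diagram of $\lambda$ and its transpose. Everything else is formal algebra in the center of $\mathbb{C}[S_k]$.
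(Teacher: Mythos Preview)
Your argument is correct and shares its central observation with the paper's proof: both recognize that $\Lower_{k,n}(\sigma)=\sgn(\sigma)\,\Raise_{k,-n}(\sigma)$ after the substitution $n^{\downarrow m}=(-1)^m(-n)^{\uparrow m}$, and both extend the dimension parameter to non-integer (or negative) values. The execution differs. You push the identity \eqref{eq:Raise_char} through to $z=-n$, invoke the content reflection $((-n)\uparrow\lambda)=(-1)^k(n\uparrow\lambda')$ together with $f^{\lambda}=f^{\lambda'}$ and $\chi^{\lambda}=\sgn\cdot\chi^{\lambda'}$, and thereby obtain an explicit character expansion of $\Lower_{k,n}$; the inversion \eqref{eq:Lower_action} then drops out from orthogonality of characters. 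The paper instead stays at the level of the convolution algebra: it uses the factorization $\Raise_{k,z}=\G_{k,z-1}*\Wg_{k,z}$ from \eqref{eq:as_g_Wg} and the sign symmetry $\G_{k,-z}=(-1)^k\sgn\cdot\G_{k,z}$ (and the induced one for $\Wg_{k,z}$) to compute $\Raise_{k,z+1}^{-1}=\Wg_{k,z}*\G_{k,z+1}=\sgn\cdot\Raise_{k,-z}$ directly, bypassing characters entirely. Your route has the bonus of producing the character expansion of $\Lower_{k,n}$ as a by-product; the paper's route is a touch shorter and makes the duality $z\leftrightarrow -z$ more transparent at the level of $\G$ and $\Wg$. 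Both derive the second identity the same way, by convolving Theorem~\ref{thm:recursive} (at $n+1$) against \eqref{eq:Lower_action}.
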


\begin{proof}
The second formula follows from Theorem~\ref{thm:recursive} and \eqref{eq:Lower_action}.
To show the first formula \eqref{eq:Lower_action}, we extend the parameter range of the functions $\G_{k,n}$, $\Wg_{k,n}$, and $\Raise_{k,n}$
from a positive integer $n$ (with $k \le n$) to a complex number $z$, following \cite{CollinsSniady2006}.
We define $\G_{k,z}$ by
\[
\G_{k,z} (\sigma)  := z^{\kappa(\sigma)} \quad (\sigma \in S_k).
\]
This function satisfies the following symmetry:
\[
\G_{k,-z}(\sigma)= (-1)^{\kappa(\sigma)} \G_{k,z}(\sigma)
=(-1)^k \sgn (\sigma) \G_{k,z}(\sigma),
\]
since $\sgn \sigma=(-1)^{k-\kappa(\sigma)}$.
It is known that $\G_{k,z}$ is invertible in the group algebra $\mathbb{C}[S_k]$ if and only if $z \not\in \{ 0, 1, \dots, k-1 \}$.
This fact can be confirmed by \eqref{eq:Wg_char}.
In this case, we denote by $\Wg_{k,z}$ the inverse of $\G_{k,z}$.
It also satisfies the symmetry relation 
$\Wg_{k,-z}(\sigma)=(-1)^k \sgn (\sigma)  \Wg_{k,z}(\sigma)$.

Similarly, we define 
\[
\Raise_{k,z}(\sigma) := (\G_{k,z-1} * \Wg_{k,z})(\sigma) =  
\sum_{t=0}^{\fix{\sigma}} (-1)^{k-t} \binom{\fix{\sigma}}{t} \frac{1}{z^{\uparrow (k-t)}},
\]
based on \eqref{eq:as_g_Wg}.
This function is invertible if and only if $z \not\in \{1, \dots, k\}$.
In particular, if $k \le n$, then $\Raise_{k,n+1}$ is invertible.
We denote the inverse of \(\Raise_{k,z+1}\) by \(\Raise^{-1}_{k,z+1}\).
Since $\G_{k,z}$ and $\Wg_{k,z}$ are inverses of each other for all $z \not\in \{ 0, 1, \dots, k-1 \}$,
by using symmetries, we can transform it as follows:
\begin{align*}
\Raise^{-1}_{k,z+1} (\sigma)&= (\Wg_{k,z} * \G_{k,z+1}) (\sigma) \\
&= (\sgn \sigma) (\Wg_{k,-z} * \G_{k,-z-1})(\sigma) \\
&= (\sgn \sigma) \Raise_{k,-z}(\sigma).
\end{align*}
Therefore, the inverse of $\Raise_{k,n+1}$ is $\sgn * \Raise_{k,-n}$,
which equals $\Lower_{k,n}$. Indeed, we have:
\begin{align*}
\sgn(\sigma)  \Raise_{k,-n}(\sigma)
&=\sgn (\sigma) 
\sum_{t=0}^{\fix{\sigma}} (-1)^{k-t} \binom{\fix{\sigma}}{t} \frac{1}{(-n)^{\uparrow (k-t)}} \\
&= \sgn (\sigma) 
\sum_{t=0}^{\fix{\sigma}} \binom{\fix{\sigma}}{t} \frac{1}{n^{\downarrow (k-t)}} \\
&= \Lower_{k,n}(\sigma).
\end{align*}
\end{proof}

\begin{remark}
    We have proved Theorem~\ref{thm:recursive} using random matrices; however, since the theorem is an algebraic statement, it would be interesting to look for an algebraic proof.
\end{remark}

\subsection{Weingarten functions in the case of low dimensions} \label{subsec:low_dim}

In the previous discussions, we often assumed that $k \le n$.
This is because the function $\G_{k,n}(\sigma)=n^{\kappa(\sigma)}$ on $S_k$ is invertible only when $k \le n$, and the inverse element is $\Wg_{k,n}$.
The Weingarten calculus itself \eqref{eq:Wg_F} holds even when $k > n$; however, in that case, it is necessary to replace $\Wg_{k,n}$ with a slightly different function $w_{k,n}$.
The function $w_{k,n}$ on $S_k$ is a class function satisfying the following equation:
\begin{equation} \label{eq:pseudo}    
\G_{k,n} * w_{k,n} * \G_{k,n} = \G_{k,n}.
\end{equation}
Such functions $w_{k,n}$ are not uniquely defined if $k >n$, but they are also called Weingarten functions.
When $k \le n$, $w_{k,n}$ is uniquely determined and coincides with $\Wg_{k,n}$.

\begin{remark}
Let us assume that $k>n$.
In addition to \eqref{eq:pseudo},
if we have
$w_{k,n} *\G_{k,n} *w_{k,n} = w_{k,n}$,
then $w_{k,n}$ is uniquely determined.
(Note that this condition was insufficient in \cite{MatsumotoCSS}.)
If we denote such a $w_{k,n}$ specifically as $W_{k,n}$, then $W_{k,n}$ is concretely given by 
\[
W_{k,n}=\frac{1}{k!} \sum_{\substack{\lambda \vdash k \\ \ell(\lambda) \le n}} \frac{f^\lambda}{(n \uparrow \lambda)} \chi^\lambda,
\]
which imposes a slight restriction $\ell(\lambda) \le n$ on \eqref{eq:Wg_char}.  
Here, $\ell(\lambda)=|\{i \ge 1 \ | \ \lambda_i>0\}|$ is the length of a partition $\lambda=(\lambda_1,\lambda_2,\dots)$.
As already mentioned, the Weingarten formula \eqref{eq:Wg_F} holds for any $w_{k,n}$ satisfying \eqref{eq:pseudo}, not only for this particular $W_{k,n}$.
For more details, see \cite{CollinsFukudaMatsumoto}.
\end{remark}

Unfortunately, Theorem~\ref{thm:recursive} does not hold for functions $w_{k,n}$ when $k \ge n$. 
Consider the case $k=n=2$ and $\sigma =e_2 \in S_2$.
Formally applying Theorem~\ref{thm:recursive} in this case suggests the following:
\begin{equation} \label{eq:wrong_formula}
w_{2,2}(e_2) \stackrel{?}{=} (\Raise_{2,2} * w_{2,1})(e_2) = \sum_{\pi \in S_2} \Raise_{2,2}(\pi) w_{2,1}(\pi).
\end{equation}
It follows that $w_{2,2}(e_2)=\Wg_{2,2}(e_2)= \frac{1}{3}$,
whereas
Definition~\ref{def:function_a} and \eqref{eq:pseudo} imply
$
\sum_{\pi \in S_2} \Raise_{2,2}(\pi) w_{2,1}(\pi)= \frac{1}{6}
\sum_{\pi \in S_2} w_{2,1}(\pi)= \frac{1}{12}.
$
Therefore, in the case $k \ge n$, a different equation analogues to Theorem~\ref{thm:recursive} must be found.
In general, it is not possible; however when $k=n$,
a similar expression to Theorem~\ref{thm:recursive} can be obtained as follows.

\begin{theorem} \label{thm:dim_equal_degree}
For $\pi \in S_n$, we have
\[
\Wg_{n,n}(\pi)= \sum_{\tau \in S_{n-1}}
a_n(\pi, \tau) \Wg_{n-1,n-1}(\tau), 
\]
where the function $a_n: S_n \times S_{n-1} \to \mathbb{C}$ is defined by 
\begin{equation} \label{eq:def_new_a}
a_n(\pi, \tau)= \sum_{t=0}^{\fix{\pi, \tau}} (-1)^{n-t+1}
\binom{\fix{\pi, \tau}}{t} \frac{1}{n^{\uparrow (n-t)}},
\end{equation}
and $\fix{\pi,\tau}= |\{i \in \{1,2,\dots, n-1\} \ | \ 
\pi (\tau(i))=i \}|$.
\end{theorem}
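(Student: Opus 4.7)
The plan is to mirror the proof of \cref{thm:recursive}, but carefully accounting for the entries in the $n$-th row and column which now enter the moment when $k=n$. First I would invoke \cref{lem:BNN} to write $U=R\tilde{V}$ with $R$ and $V$ independent, observing that $u_{i,i}=\sum_{p=1}^{n-1}r_{i,p}v_{p,i}$ for $i<n$, while $u_{n,n}=r_{n,n}$ and $u_{\pi(n),n}=r_{\pi(n),n}$. Expanding the monomial defining $\Wg_{n,n}(\pi)$, separating the $R$- and $V$-averages by independence, applying the Weingarten formula \eqref{eq:Wg_F} on $\UU(n-1)$ to the $V$-moment (which is of degree $n-1$, so $\Wg_{n-1,n-1}$ is well defined), and performing the substitution $q_i=p_{\tau^{-1}(i)}$ as in \cref{thm:recursive}, yields
\[
\Wg_{n,n}(\pi)=\sum_{\tau\in S_{n-1}}b(\pi,\tau)\,\Wg_{n-1,n-1}(\tau),
\]
with
\[
b(\pi,\tau)=\sum_{p_1,\ldots,p_{n-1}=1}^{n-1}\mathbb{E}\!\left[r_{n,n}\overline{r_{\pi(n),n}}\prod_{i=1}^{n-1}r_{i,p_i}\overline{r_{\pi\tau(i),p_i}}\right].
\]
It then suffices to show $b(\pi,\tau)=a_n(\pi,\tau)$.

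For this identification I would reuse the same two tools as in \cref{thm:recursive} and \cref{lem:expression_A}: column-wise unitarity of $R$ gives $\sum_{p=1}^{n-1}r_{i,p}\overline{r_{j,p}}=\delta_{i,j}-x_i\overline{x_j}$, and by \eqref{matrixR} we have $r_{n,n}=x_n$, $r_{\pi(n),n}=x_{\pi(n)}$, so that
\[
b(\pi,\tau)=\mathbb{E}\!\left[x_n\overline{x_{\pi(n)}}\prod_{i=1}^{n-1}\bigl(\delta_{i,\pi\tau(i)}-x_i\overline{x_{\pi\tau(i)}}\bigr)\right].
\]
Writing $F=\{i\in\{1,\ldots,n-1\}:\pi\tau(i)=i\}$ with $|F|=f$ and $J=\{1,\ldots,n-1\}\setminus F$, I would expand the product into a sum over subsets $S\subseteq F$ (choosing $-|x_i|^2$ on each $i\in S$ and $1$ otherwise), and evaluate each resulting spherical moment via \cref{lem:Wg_for_R}.

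The main obstacle, and the origin of the shifted sign $(-1)^{n-t+1}$ appearing in \eqref{eq:def_new_a}, lies in the outer factor $x_n\overline{x_{\pi(n)}}$, which forces a brief case split. When $\pi(n)=n$ this factor is simply $|x_n|^2$ and the argument runs essentially as in \cref{lem:expression_A}, with each surviving term evaluating to $1/n^{\uparrow(n-f+|S|)}$. When $\pi(n)\ne n$, I would first observe that $\pi(n)\notin F$ (otherwise $\tau(\pi(n))=n$, impossible for $\tau\in S_{n-1}$), so that $\pi(n)\in J$ and there is a unique $i_*\in J$ with $\pi\tau(i_*)=n$; the factor $-x_{i_*}\overline{x_n}$ then combines with $x_n\overline{x_{\pi(n)}}$ to restore matching of $x$- and $\overline{x}$-powers across both sides, and each surviving term again evaluates to $1/n^{\uparrow(n-f+|S|)}$. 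In either case, collecting the signs from the expansions of the $F$- and $J$-factors together with the outer factor, and reindexing $t=f-|S|$ in the resulting binomial sum, produces exactly the formula \eqref{eq:def_new_a} for $a_n(\pi,\tau)$, completing the identification.
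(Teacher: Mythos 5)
Your proposal is correct and follows essentially the same route as the paper's own proof: the decomposition $U=R\tilde V$ from \cref{lem:BNN}, the Weingarten formula on $\UU(n-1)$ for the $V$-moment, row orthonormality of $R$ to reduce the kernel to $\mathbb{E}\bigl[x_n\overline{x_{\pi(n)}}\prod_{i=1}^{n-1}(\delta_{i,\pi\tau(i)}-x_i\overline{x_{\pi\tau(i)}})\bigr]$, and \cref{lem:Wg_for_R} to evaluate it. The only cosmetic difference is your case split on $\pi(n)=n$ versus $\pi(n)\neq n$; the paper avoids this by observing directly that the non-fixed factors together with $x_n\overline{x_{\pi(n)}}$ combine into $\prod_{j\in F'}|x_j|^2$ with $F'=\{1,\dots,n\}\setminus F$, but your bookkeeping reaches the same conclusion.
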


Note that $a_n(\pi, \tau)$ differs slightly from $\Raise_{n,n}(\pi \tau)$.

According to this theorem, the incorrect formula \eqref{eq:wrong_formula} can be corrected as follows:
\[
\Wg_{2,2}(e_2)=a_2(e_2,e_1) \Wg_{1,1}(e_1),
\]
and both values are equal to $\frac{1}{3}$.
As another example, the following equation holds:
\[
\Wg_{3,3}(e_3)= a_3(e_3, e_2) \Wg_{2,2}(e_2)+
a_3(e_3, (1 \, 2)) \Wg_{2,2}
((1 \, 2)).
\]
By substituting specific values, this equation becomes:
\[
\frac{7}{120}= \frac{11}{60} \cdot \frac{1}{3} + \frac{1}{60} \cdot \left(- \frac{1}{6}\right).
\]

\begin{proof}[Proof of Theorem~\ref{thm:dim_equal_degree}]
The proof is almost the same as that of Theorem~\ref{thm:recursive}. 
We will focus on explaining the different parts.
Let the matrices $U=(u_{ij})$, $R=(r_{ij})$, and $V=(v_{ij})$ be as stated in Lemma ~\ref{lem:BNN}.
Let $\pi \in S_n$, and consider 
    \[
    \Wg_{n,n}(\pi) = \mathbb{E} \left[ u_{1,1} \cdots u_{n-1,n-1} u_{n,n} 
    \overline{u_{\pi(1),1}  \cdots u_{\pi(n-1),n-1} u_{\pi(n),n}} \right].   
    \]
    We substitute 
    \[
    u_{ij} = 
        \begin{cases}
        \sum_{p=1}^{n-1} r_{i,p} v_{p,j} & \text{if $1 \le i \le n$ and $1 \le j \le n-1$}, \\
        r_{i,n} & \text{if $1 \le i \le n$ and $j=n$}.
        \end{cases}
    \]
    After some direct calculations, we have:
    \[\Wg_{n,n}(\pi)= \sum_{\tau \in S_{n-1}} \tilde{a}(\pi,\tau) \Wg_{n-1,n-1}(\tau),
    \]
    where 
    \[
    \tilde{a}(\pi,\tau)= \mathbb{E}\left[ r_{1, p_1} \cdots r_{n-1, p_{n-1}} r_{n,n}
    \overline{r_{\pi (\tau(1)), p_1} \cdots r_{\pi (\tau(n-1)), p_{n-1}} r_{\pi(n),n}}
    \right].
    \]
    As with the proof of Theorem~\ref{thm:recursive}, we obtain the following expression:
    \[
    \tilde{a}(\pi,\tau)= \mathbb{E} \left[\prod_{i=1}^{n-1}\left(\delta (i,\pi(\tau(i)))-x_{i} \overline{x_{\pi(\tau(i))}}\right) \times x_n \overline{x_{\pi(n)}} \right].
    \]
    From here, we mimic the proof of Lemma~\ref{lem:expression_A}.
    Define 
    \[
    F=\{i \in \{1,2,\dots,n-1\} \mid \pi(\tau(i))=i\},
    \]
    and put $F'=\{1,2,\dots,n\} \setminus F$.
    Note that $F'$ is not the complement in $\{1,2,\dots,n-1\}$, but rather in $\{1,2,\dots,n\}$.
    Then, we have:
    \begin{align*}
    \tilde{a}(\pi,\tau) 
    &= (-1)^{n-1} \mathbb{E} \left[ \prod_{i \in F} (|x_i|^2-1) \cdot \prod_{j \in F'} |x_j|^2 \right] \\
    &= \sum_{s=0}^{|F|} (-1)^{n-1+|F|-s} \sum_{\{i_1< \cdots<i_s\} \subset F} 
    \mathbb{E} \left[|x_{i_1}|^2 \cdots |x_{i_s}|^2 \prod_{j \in F'} |x_j|^2 \right] \\
    &= \sum_{s=0}^{|F|} (-1)^{n-1+|F|-s} \binom{|F|}{s} \frac{1}{n^{\uparrow (s+n-|F|)}} \\
    &= \sum_{t=0}^{|F|} (-1)^{n-1+t} \binom{|F|}{t} \frac{1}{n^{\uparrow (n-t)}}
    =a_n(\pi, \tau),
    \end{align*}
    as desired. We used Lemma~\ref{lem:Wg_for_R} in the third equality.
\end{proof}

Finally, let us assume $k<n$ and let $\pi \in S_k$.
By repeatedly applying Theorem~\ref{thm:recursive}, we have
\begin{align*}
\Wg_{k,n}(\pi) &= \sum_{\sigma_{n-1} \in S_k}
\sum_{\sigma_{n-2} \in S_k} \cdots \sum_{\sigma_{k} \in S_k}
\Raise_{k,n} (\pi\sigma_{n-1}^{-1}) \, \Raise_{k,n-1} (\sigma_{n-1}\sigma_{n-2}^{-1}) \, \cdots \, \\
& \qquad \times 
\Raise_{k,k+1} (\sigma_{k+1}\sigma_{k}^{-1}) \Wg_{k,k} (\sigma_{k}).
\end{align*}
Furthermore, Theorem~\ref{thm:dim_equal_degree} implies that, for each $\sigma \in S_{k}$
\[
\Wg_{k,k}(\sigma) = \sum_{\tau_{k-1} \in S_{k-1}}
\sum_{\tau_{k-2} \in S_{k-2}} \cdots \sum_{\tau_{2} \in S_2}
a_k(\sigma, \tau_{k-1}) \, a_{k-1}(\tau_{k-1}, \tau_{k-2}) \, \cdots \, a_2(\tau_{2}, e_1).
\]
By combining these results, the Weingarten function $\Wg_{k,n} \in \mathbb{C}[S_k]$ for $k \le n$ can be expressed in terms of two types of functions: $\Raise_{k,*}(\sigma)$ and $a_*(\sigma,\tau)$.

\begin{proposition} \label{prop:Wg_asasas}
Let $k$ and $n$ be positive integers, and suppose $k \le n$.
Then, for any $\pi \in S_k$, we have
\begin{align*}    
\Wg_{k,n}(\pi) &= \sum \Raise_{k,n} (\pi\sigma_{n-1}^{-1})  \, \Raise_{k,n-1} (\sigma_{n-1} \sigma_{n-2}^{-1}) \, \cdots \,\Raise_{k,k+1} (\sigma_{k+1}\sigma_{k}^{-1}) \\
& \qquad \times a_k(\sigma_k, \tau_{k-1}) \, a_{k-1}(\tau_{k-1}, \tau_{k-2}) \, \cdots \, a_2(\tau_{2}, e_1)
\end{align*}
summed over all sequences of permutations
\[
(\underbrace{\sigma_{n-1}, \sigma_{n-2}, \dots, \sigma_{k}}_{n-k}, \tau_{k-1},\tau_{k-2},\dots, \tau_2) \in \underbrace{S_k \times \cdots \times S_k}_{n-k} \times S_{k-1} \times S_{k-2} \times \cdots \times S_2.
\]
Here, $\Raise_{k,n}(\sigma)$ is defined in Definition~\ref{def:function_a}, and $a_n
(\sigma,\tau)$ is defined in \eqref{eq:def_new_a}.
\end{proposition}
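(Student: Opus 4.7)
The plan is to combine the two recursive identities \cref{thm:recursive} and \cref{thm:dim_equal_degree} by iterating each of them and gluing the resulting expressions together. The outline given in the paragraphs immediately preceding the proposition already suggests this strategy; the remaining task is to turn the convolution products into the explicit telescoping sum announced in the statement.

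First, under the hypothesis $k \le n$, one iterates \cref{thm:recursive} exactly $n-k$ times (vacuously when $n=k$) to peel off dimensions $n, n-1, \dots, k+1$, obtaining the convolution identity
\[
\Wg_{k,n} = \Raise_{k,n} * \Raise_{k,n-1} * \cdots * \Raise_{k,k+1} * \Wg_{k,k}
\]
in $\mathbb{C}[S_k]$. Evaluating this product at $\pi$ by repeated use of $(f*g)(\pi)=\sum_\sigma f(\pi\sigma^{-1})g(\sigma)$, together with the class-function identity $h(\sigma^{-1}\rho)=h(\rho\sigma^{-1})$ (applied to rewrite each intermediate factor), converts the iterated convolution into the telescoping form
\[
\sum_{\sigma_{n-1},\dots,\sigma_k \in S_k} \Raise_{k,n}(\pi\sigma_{n-1}^{-1})\,\Raise_{k,n-1}(\sigma_{n-1}\sigma_{n-2}^{-1})\cdots \Raise_{k,k+1}(\sigma_{k+1}\sigma_k^{-1})\,\Wg_{k,k}(\sigma_k).
\]

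Second, one iterates \cref{thm:dim_equal_degree} starting from $\Wg_{k,k}(\sigma_k)$. Each application lowers both the degree and the dimension by one, producing a sum of the form $\sum_{\tau_{j-1}\in S_{j-1}} a_j(\cdot,\tau_{j-1})\Wg_{j-1,j-1}(\tau_{j-1})$. Iterating down to $j=2$ yields
\[
\Wg_{k,k}(\sigma_k)=\sum_{\tau_{k-1},\dots,\tau_1} a_k(\sigma_k,\tau_{k-1})\,a_{k-1}(\tau_{k-1},\tau_{k-2})\cdots a_2(\tau_2,\tau_1)\,\Wg_{1,1}(\tau_1).
\]
Since $S_1=\{e_1\}$ and $\Wg_{1,1}(e_1)=\mathbb{E}[|u_{11}|^2]=1$ on $\UU(1)$, the innermost sum collapses, and the remaining factor is precisely the chain $a_k(\sigma_k,\tau_{k-1})\cdots a_2(\tau_2,e_1)$ appearing in the proposition.

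Substituting this expression for $\Wg_{k,k}(\sigma_k)$ into the formula from the first step yields the announced identity. There is no genuine obstacle: the argument is a formal chaining of the two previous theorems, and the only care required is the bookkeeping of the change of summation variables that turns the iterated convolution into the telescoping product of factors $\Raise_{k,j}(\sigma_j\sigma_{j-1}^{-1})$, together with the observation that the recursion bottoms out correctly at $\Wg_{1,1}(e_1)=1$.
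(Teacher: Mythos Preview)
Your proposal is correct and follows essentially the same approach as the paper: iterate \cref{thm:recursive} $n-k$ times to reduce $\Wg_{k,n}$ to a convolution ending in $\Wg_{k,k}$, then iterate \cref{thm:dim_equal_degree} to unfold $\Wg_{k,k}(\sigma_k)$ down to $\Wg_{1,1}(e_1)=1$, and substitute. The paper presents exactly this argument in the paragraphs preceding the proposition (as you yourself note), with the same telescoping summation variables and the same termination at $e_1$.
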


This proposition allows the Weingarten function to be calculated sequentially starting with the base case $n=1$.

\bigskip

\bibliography{WeingartenInduction.bib}
\bibliographystyle{alpha}

\end{document}